\documentstyle[11pt]{article}   
\input epsf.sty
\setlength{\textheight}{8.8in}     
\setlength{\textwidth}{6.35in}      
\setlength{\evensidemargin}{0.1in} 
\setlength{\oddsidemargin}{0.1in}  
\setlength{\headsep}{10pt}         
\setlength{\topmargin}{0.0in}      

\newtheorem{theorem}{Theorem}[section]

\newtheorem{lemma}{Lemma}[section]
\newtheorem{proposition}{Proposition}

\def\RRR{\vbox{\hbox to8.9pt{I\hskip-2.1pt R\hfil}}}
\newcommand{\uno}{\hbox{\bf 1}}

\newcommand{\ds}{\displaystyle}

\newcommand{\barF}{\overline{F}}

\newcommand{\Prob}{\mathop{\rm P}}

\newcommand{\remove}[1]{}

\newcommand{\Reali}{\RRR}

\newcommand{\qed}{\hfill\rule{2mm}{2mm}}  
\def\eq#1{~{\rm (\ref{equation:#1})}}

%
\newenvironment{proof}{
\begin{trivlist}
\item[\hspace{\labelsep}{\bf\noindent Proof. }]
}{\qed\end{trivlist}}
\def\image #1 by #2 (#3)
{\vbox to #2 {\hrule width #1 height 0pt depth 0pt
\vfill\special{picture #3}}}
\def\scaledimage #1 by #2 (#3 scaled #4)
{\dimen0 = #1\dimen1 = #2
\divide\dimen0 by 1000 \multiply\dimen0 by #4
\divide\dimen1 by 1000 \multiply\dimen1 by #4
\image \dimen0 by \dimen1 (#3 scaled #4)
}
\title{\Large \bf
Simulation of first-passage times\\ 
for alternating Brownian motions
}
\author{
 {\sc  \small{A. DI CRESCENZO$^{(1)}$\footnote{corresponding author}}, }
 {\sc \small{E. DI NARDO$^{(2)}$}, }
 {\sc \small{L.M. RICCIARDI$^{(3)}$}}\\
 \\
 {\small \rm (1) \ Dipartimento di Matematica e Informatica,
 Universit\`a di Salerno} \\
 {\small \rm Via Ponte don Melillo, I-84084 Fisciano (SA), Italy} \\
 {\small \rm E-mail: \tt adicrescenzo@unisa.it}\\
 \\
 {\small \rm (2) \ Dipartimento di Matematica,
 Universit\`a degli Studi della Basilicata}\\
 {\small \rm C.da\ Macchia Romana, I-85100 Potenza, Italy}\\
 {\small \rm E-mail: \tt dinardo@unibas.it}\\
 \\
 {\small \rm (3) \ Dipartimento di Matematica e Applicazioni, 
 Universit\`a di Napoli Federico II}\\
 {\small \rm Via Cintia, I-80126 Napoli, Italy}\\
 {\small\rm E-mail: \tt luigi.ricciardi@unina.it}\\
}
\date{}
\begin{document}
\setlength{\baselineskip}{13pt}
\maketitle
\begin{abstract}\normalsize
The first-passage-time problem for a Brownian motion with alternating infinitesimal 
moments through a constant boundary is considered under the assumption that the time 
intervals between consecutive changes of these moments are described by an alternating 
renewal process. Bounds to the first-passage-time density and distribution 
function are obtained, and a simulation procedure to estimate first-passage-time 
densities is constructed. Examples of applications to problems in environmental 
sciences and mathematical finance are also provided. 

\medskip\noindent
{\bf Keywords:\/} Brownian motion; alternating infinitesimal moments;
renewal process; first-passage time; simulation.

\medskip\noindent
{\bf AMS 2000 Subject Classification:} 
60J65, 
60G40, 
93E30. 
\end{abstract}
%
\section{Introduction}\label{section:A}
Stochastic processes and related first-passage-time (FPT) problems have received  
increasing attention in the literature within the context of model building 
in biology (see, for instance, Ricciardi {\em et al.}, 1999, and references therein). 
Consequently, efforts have also been directed towards the design of computational 
methods suitable to estimate FPT densities, in particular for general Gaussian 
processes (see Di~Nardo {\em et al.}, 2001) and for diffusion processes 
(see references in Ricciardi {\em et al.}, 1999; in particular, 
Buonocore {\em et al.}, 1987, and Giorno {\em et al.}, 1989). 
\par
Here we address the FPT problem through a constant boundary for a model of 
random motion on the real line consisting of a Brownian process whose infinitesimal 
moments alternate between fixed values at the occurrence times of an alternating 
renewal process. Our approach has a twofold aim: (i) to obtain bounds to the FPT 
density and distribution function conditional on the initial infinitesimal moments 
and initial state, and (ii) to set up a simulation procedure for the estimation 
of FPT densities. 
\par
A mathematical specification of the random motion will be given in Section~\ref{section:B}, 
where the key definitions related to the FPT problem through a constant boundary 
will also be recalled. In Section~\ref{section:C}, bounds to the FPT density and 
distribution function will be obtained, and an example of bimodal FPT pdf will be 
provided. In Section~\ref{section:D} a procedure to generate FPTs will be constructed 
by simulating sample paths of the Brownian motion at the random times when the 
infinitesimal moments alternate. Our simulation procedure is specifically based 
on the independent increments property of the Brownian motion and on the 
available closed form expression of its FPT density through a constant boundary. 
In Section~\ref{section:E} two applications to environmental sciences and to 
mathematical finance, respectively, will be provided. In the Appendix, the involved 
probabilistic justification of some crucial steps of the simulation algorithm 
will be presented. 
\par
We wish to mention that Brownian motion with alternating infinitesimal moments 
might provide a fundamental ingredient for the construction of 
a viable model of a phenomenon of great biological relevance: the dynamics of 
the motion of myosin heads along actin filaments, that 
is responsible for the force generation underlying muscle contraction.  
Highly innovative and accurate measurements have indeed proved that such motion consists 
of small steps randomly alternating in direction (see Cyranoski, 2000). 
\section{Brownian motion with alternating behaviors}\label{section:B}
We consider a Brownian motion on the real line characterized by alternating 
behaviors, in the sense that its infinitesimal moments switch between fixed 
values at certain random times. More precisely, starting from state $x_0$ at 
time $0$, the motion undergoes two alternating regimes during random periods 
governed by an alternating renewal process $\{\delta(t),t\geq 0\}$, defined on states $1$ and $2$. 
Here, we assume that $\delta(t)$ is governed 
by two independent sequences of non-negative iid random variables $\{U_1,U_2,\ldots\}$ 
and $\{D_1,D_2,\ldots\}$. We assume that during the periods of random lengths $U_i$, 
$i=1,2,\ldots$, one has $\delta(t)=1$ while the Brownian motion has drift 
$\mu_1\geq 0$ and infinitesimal variance $\sigma_1^2>0$; instead, during the periods 
of random lengths $D_i$, $i=1,2,\ldots$, it is $\delta(t)=2$, and the infinitesimal 
moments take values $\mu_2\leq 0$ and $\sigma_2^2>0$. We assume that at time $0$ the 
initial regime can be arbitrarily specified as $\delta(0)=1$ or $\delta(0)=2$. 
Therefore, the periods of random lengths characterizing the alternating motion 
are $U_1,D_1,U_2,D_2,\ldots$ if $\delta(0)=1$, and $D_1,U_1,D_2,U_2,\ldots$ if 
$\delta(0)=2$. Consequently, for all $t>0$ we have:
$$
 \delta(t)=\left\{
 \begin{array}{ll}
 \ds\sum_{n=1}^{+\infty}\uno_{\{T_{2n-2}\leq t<T_{2n-1}\}}
 +2\ds\sum_{n=1}^{+\infty}\uno_{\{T_{2n-1}\leq t<T_{2n}\}}
 & {\rm if }\; \delta(0)=1, \\
 \hfill & \hfill \\
 2\ds\sum_{n=1}^{+\infty}\uno_{\{T_{2n-2}\leq t<T_{2n-1}\}}
 +\ds\sum_{n=1}^{+\infty}\uno_{\{T_{2n-1}\leq t<T_{2n}\}}
 & {\rm if }\; \delta(0)=2,
 \end{array}
 \right.
$$
where $T_0=0$ and $T_n$, $n=1,2,\ldots$, denotes the $n$-th random instants in 
which the infinitesimal moments change values, with 
\begin{equation}
 T_{2n-1}=T_{2n-2}+\left\{
 \begin{array}{ll}
 U_n & {\rm if }\; \delta(0)=1, \\
 \hfill & \hfill \\
 D_n & {\rm if }\; \delta(0)=2,
 \end{array}
 \right.
 \qquad
 T_{2n}=T_{2n-1}+\left\{
 \begin{array}{ll}
 D_n & {\rm if }\; \delta(0)=1, \\
 \hfill & \hfill \\
 U_n & {\rm if }\; \delta(0)=2.
 \end{array}
 \right.
 \label{equation:29}
\end{equation}
Denoting by $\{X(t),t\geq 0\}$ the stochastic process that describes the motion, 
we thus have: 
\begin{equation}
 X(t)=x_0+\int_0^t \mu(\tau)\,{\rm d}\tau+\sigma(t)\,B(t),
 \qquad t>0.
 \label{equation:1}
\end{equation}
with $X(0)=x_0\in\Reali$,  
\begin{equation}
\mu(t)=\left\{
 \begin{array}{ll}
 \mu_1, & \hbox{if }\delta(t)=1 \\
 \mu_2, & \hbox{if }\delta(t)=2, 
 \end{array}
 \right.
 \qquad 
\sigma(t)=\left\{
 \begin{array}{ll}
 \sigma_1, & \hbox{if }\delta(t)=1 \\
 \sigma_2, & \hbox{if }\delta(t)=2, 
 \end{array}
 \right.
 \label{equation:35}
\end{equation}
and where $\{B(t), t\geq 0\}$ is a 
standard Brownian motion that is assumed to be independent of $\{\delta(t)\}$. 
\par
For all $x\in\Reali$, $t\geq 0$, $x_0\in\Reali$ and $k=1,2$ let us set
$$
 \begin{array}{l}
 f_{k}(x,t\,|\,x_0):=\ds{\partial \over \partial x}
 \Prob\{X(t)\leq x,\delta(t)=1\,|\,X(0)=x_0, \delta(0)=k\}, 
 \vspace{0.1cm} \\
 b_{k}(x,t\,|\,x_0):=\ds{\partial \over \partial x}
 \Prob\{X(t)\leq x,\delta(t)=2\,|\,X(0)=x_0, \delta(0)=k\}.
 \end{array}
$$
Thus, $f_{k}$ is the pdf of $X(t)$ when the drift is positive, while $b_{k}$ relates 
to negative drift, both conditional on initial state $x_0$ and on initial regime $k$. 
Hence, for all $x\in\Reali$ and $t\geq 0$ 
$$
 p_{k}(x,t\,|\,x_0):=f_{k}(x,t\,|\,x_0)+b_{k}(x,t\,|\,x_0)
$$
is the pdf of $X(t)$ conditional on $X(0)=x_0\in\Reali$ and on 
$\delta(0)=k\in\{1,2\}$.\footnote{The determination of $p_k$ in the particular 
case of constant infinitesimal variance and exponentially distributed switching 
times has been provided in Di Crescenzo (2000).} 
\par
Let us now address the FPT problem through a boundary $\beta$ for  
$X(t)$ starting from $x_0\in\Reali$ at time $0$. We assume $\beta>x_0$. (Case $\beta<x_0$ 
can be consequently analyzed by resorting to symmetry considerations). Let 
\begin{equation}
 T_{k}^X=\inf\{t>0: X(t)>\beta\},
 \qquad X(0)=x_0,\; \delta(0)=k\in\{1,2\}
 \label{equation:32}
\end{equation}
be the FPT of $\{X(t)\}$ through $\beta$ from below. We denote by 
\begin{equation}
 H_{k}(\beta,t\,|\,x_0)
 :=\Prob\left\{T_{k}^X\leq t\,\Big|\,X(0)=x_0,\delta(0)=k\right\} 
 \label{equation:23a}
\end{equation}
its cumulative distribution function (cdf) and by 
\begin{equation}
  h_{k}(\beta,t\,|\,x_0)
  :=\ds{\partial \over\partial t}H_{k}(\beta,t\,|\,x_0) 
 \label{equation:23b}
\end{equation}
the corresponding pdf. For $k=1,2$ let now   
\begin{equation}
 W_k(t):=x_0+\mu_k\,t+\sigma_k\,B(t),
 \qquad
 t\geq 0,
 \label{equation:24}
\end{equation}
be the Wiener process characterized by drift $\mu_k\in\Reali$ and infinitesimal
variance $\sigma^2_k>0$, with initial state $W_k(0)=x_0\in\Reali\,$, and 
\begin{equation}
 T_{k}^W=\inf\{t>0: W_k(t)>\beta\},
 \qquad W_k(0)=x_0
 \label{equation:38}
\end{equation}
be the {\em upward\/} FPT of $W_k(t)$ from $x_0$ 
through the boundary $\beta$, with $\beta>x_0$. 
\par
In the foregoing, use of the following formulas will be made: \\ \hfill \\
(i) \ FPT cdf of $W_k(t)$ through $\beta$:
\begin{eqnarray}
 G_{k}(\beta,t\,|\,x_0) \!\!\!\! 
 &=& \!\!\!\! \Prob\left\{T_{k}^W\leq t\,\Big|\,W(0)=x_0\right\} 
 \nonumber \\
 &=& \!\!\!\! \Phi\left(-\ds{\beta-x_0-\mu_k t\over \sigma_k\sqrt{t}}\right)
 +e^{2\mu_k(\beta-x_0)/\sigma_k^2}\,
 \Phi\left(-\ds{\beta-x_0+\mu_k t\over \sigma_k\sqrt{t}}\right),
 \label{equation:10}
\end{eqnarray}
where $\Phi(x)=(2\pi)^{-1/2}\int_{-\infty}^x e^{-y^2/2}\,dy$; note that 
$\ds\lim_{t\to +\infty}G_{k}(\beta,t\,|\,x_0)=1$ for $\mu_k\geq 0$ and $\qquad$
$\ds\lim_{t\to +\infty}G_{k}(\beta,t\,|\,x_0)=\exp\{2\mu_k(\beta-x_0)/\sigma^2_k\}$ 
for $\mu_k\leq 0$; \\ \hfill \\
(ii) \  FPT pdf of $W_k(t)$ through $\beta$:
\begin{equation}
 g_{k}(\beta,t\,|\,x_0)={d\over dt}
 \Prob\left\{T_{k}^W\leq t\,\Big|\,W(0)=x_0\right\}
 ={\beta-x_0\over \sqrt{2\pi\sigma^2_k t^3}}\,
 \exp\left\{-{(\beta-x_0-\mu_k t)^2\over 2\sigma^2_k t}\right\};
 \label{equation:12}
\end{equation}
(iii) \ the $\beta$-avoiding pdf of $W_k(t)$:
\begin{eqnarray}
 && \alpha_{k}(x,t\,|\,x_0) = {\partial\over \partial x}
 \Prob\left\{W(t)\leq x, T_{k}^W>t\,\Big|\,W(0)=x_0\right\}
 \nonumber \\
 && \hspace{0.5cm} 
 ={1\over \sqrt{2\pi\sigma_k^2 t}}\,
 \exp\left\{-{(x-x_0-{\mu_k}t)^2\over 2\sigma^2_k t}\right\}
 \left[1-\exp\left\{-{2\over \sigma^2_k t}\,(\beta-x)\,(\beta-x_0)\right\}\right].
 \label{equation:13}
\end{eqnarray}
%
\section{Bounds to FPT pdf and cdf}\label{section:C}
In this section we obtain bounds to the FPT pdf\eq{23b}
and cdf\eq{23a}. Hereafter we shall denote by
$F_U(t)=\Prob(U_n\leq t)$ and $F_D(t)=\Prob(D_n\leq t)$ the cdf's
of random times $U_n$ and $D_n$, $n=1,2,\ldots$, respectively, and by
$\barF_U(t)=1-F_U(t)$ and $\barF_D(t)=1-F_D(t)$ the corresponding survival
functions.
\subsection{Lower bounds to FPT densities}
First of all we point out the existence of some renewal-type equations involving 
the FPT pdf's $h_k$ defined in\eq{23b}.
\begin{lemma}\label{lemma:1}
For all $t>0$ and $\beta>x_0$ the following equations hold:
\begin{equation}
 h_1(\beta,t\,|\,x_0)=\barF_U(t)\,g_1(\beta,t\,|\,x_0)
 +\ds\int_0^t\int_{-\infty}^{\beta}h_{2}(\beta,t-u\,|\,x)\,
 \alpha_1(x,u\,|\,x_0)\,{\rm d}x\,{\rm d}F_U(u), 
 \label{equation:4a}
\end{equation}
\begin{equation}
 h_{2}(\beta,t\,|\,x_0)=\barF_D(t)\,g_{2}(\beta,t\,|\,x_0)
 +\ds\int_0^t\int_{-\infty}^{\beta}h_1(\beta,t-u\,|\,x)\,
 \alpha_{2}(x,u\,|\,x_0)\,{\rm d}x\,{\rm d}F_D(u). 
 \label{equation:4b}
\end{equation}
\end{lemma}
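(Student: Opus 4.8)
The plan is to condition on the first switching time of the alternating renewal process and exploit the fact that, up to that switch, the process $X$ behaves exactly as the Wiener process $W_1$ (when $\delta(0)=1$). Fix $\delta(0)=1$ and let $U_1$ be the length of the first regime, so that $X(t)=W_1(t)$ for $t\in[0,U_1)$, with $W_1$ as in\eq{24}. There are two mutually exclusive ways the event $\{T_1^X\le t\}$ with first-passage density at time $t$ can occur: either the process crosses $\beta$ before the switch (i.e. $T_1^W<U_1$ and $T_1^W\in dt$), or it has not yet crossed $\beta$ by the switch time $u=U_1\le t$, is located at some $x<\beta$ at that instant, and then crosses $\beta$ at time $t$ starting afresh from $x$ under the new regime $\delta=2$. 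This dichotomy is what produces the two terms on the right-hand side of\eq{4a}.

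First I would treat the ``no switch yet'' term. By independence of $\{\delta(t)\}$ and $\{B(t)\}$, conditioning on $U_1>t$ gives probability $\barF_U(t)$ that the regime has not changed on $[0,t]$; on that event the first-passage density of $X$ through $\beta$ is exactly that of $W_1$, namely $g_1(\beta,t\,|\,x_0)$ from\eq{12}. This yields the leading term $\barF_U(t)\,g_1(\beta,t\,|\,x_0)$. Next I would treat the ``switch at time $u\le t$'' term: conditioning on $U_1=u$ (integrating against ${\rm d}F_U(u)$), on $T_1^W>u$, and on the position $W_1(u)=x<\beta$, the relevant density of the $\beta$-avoiding motion at the switch instant is $\alpha_1(x,u\,|\,x_0)$ from\eq{13}. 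Given this, by the strong Markov property of Brownian motion at the deterministic-given-$U_1$ time $u$ (the increments of $B$ after $u$ are independent of the past, and the renewal clock restarts), the process evolves for the remaining time $t-u$ as the alternating Brownian motion started at $x$ with $\delta=2$; hence its first-passage density through $\beta$ at the residual time $t-u$ is $h_2(\beta,t-u\,|\,x)$. Integrating over $x\in(-\infty,\beta)$ and $u\in(0,t)$ and adding the two contributions gives\eq{4a}. Equation\eq{4b} follows by the identical argument with the roles of $U$ and $D$, of regimes $1$ and $2$, and of $g_1,\alpha_1$ versus $g_2,\alpha_2$ interchanged, which is legitimate because the two regimes enter the model symmetrically.

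The main obstacle is the rigorous justification of the regeneration step: one must argue that at the first switching time the future of the alternating Brownian motion, conditioned on the current position $x$ and on not having hit $\beta$ yet, is a fresh copy of the process started from $x$ in regime $2$, with first-passage law $h_2(\beta,\cdot\,|\,x)$. This requires combining the independent-increments property of $B$ with the renewal structure of $\{\delta(t)\}$ and a disintegration of $\Prob\{W_1(u)\in dx,\ T_1^W>u\}$ into the $\beta$-avoiding density $\alpha_1(x,u\,|\,x_0)\,{\rm d}x$; the interchange of the $x$-integration with the conditioning on $\{T_1^W>u\}$ is where care is needed, and the detailed probabilistic justification is precisely what the paper defers to the Appendix. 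Once that regeneration identity is granted, the rest is a bookkeeping of the two disjoint cases and an application of Fubini to write the second term as the stated double integral.
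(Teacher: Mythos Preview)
Your proposal is correct and follows essentially the same approach as the paper: condition on the first switching time $U_1$, split according to whether $U_1>t$ (yielding the $\barF_U(t)\,g_1$ term) or $U_1=u\le t$ (yielding, via the $\beta$-avoiding density $\alpha_1$ and the renewal restart in regime~2, the double integral), and argue symmetrically for\eq{4b}. One minor correction to your closing remark: the paper's Appendix is devoted to justifying the acceptance--rejection steps of the simulation algorithm, not to the regeneration argument behind this lemma, which the paper handles just as informally as you do.
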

\begin{proof}
Conditioning on $U_1$, from\eq{23b} we have:
$$
 h_1(\beta,t\,|\,x_0)\,{\rm d}t
 =\int_0^t\Prob\left\{T_1^X\in {\rm d}t\,\Big|\,U_1=u,X(0)=x_0,\delta(0)=1\right\}{\rm d}F_U(u)
 +\barF_U(t)\,g_1(\beta,t\,|\,x_0)\,{\rm d}t.
$$
Eq.\eq{4a} then follows noting that, for $0<u<t$, there results:
$$
 \Prob\left\{T_1^X\in {\rm d}t\,\Big|\,U_1=u,X(0)=x_0,\delta(0)=1\right\}
 =\int_{-\infty}^{\beta}h_{2}(\beta,t-u\,|\,x)\,
 \alpha_1(x,u\,|\,x_0)\,{\rm d}x.
$$
The proof of\eq{4b} can be similarly obtained.
\end{proof}
\par
Lower bounds to FPT densities $h_k$ will now be obtained. 
\begin{theorem}\label{theorem:1}
For all $t>0$ and $\beta>x_0$ we have:
\begin{equation}
 \begin{array}{l}
 h_1(\beta,t\,|\,x_0) 
 \geq \barF_U(t)\,g_1(\beta,t\,|\,x_0)
 + \ds \int_0^t \barF_D(t-u)\,I_1(u,t\,|\,x_0)\,{\rm d}F_U(u), 
 \vspace{0.1cm} \\
 h_{2}(\beta,t\,|\,x_0) 
 \geq \barF_D(t)\,g_{2}(\beta,t\,|\,x_0)
 + \ds \int_0^t \barF_U(t-u)\,I_{2}(u,t\,|\,x_0)\,{\rm d}F_D(u).
 \end{array}
 \label{equation:11}
\end{equation}
where, for $0<u<t$, 
\begin{equation}
 \begin{array}{l}
 I_1(u,t\,|\,x_0):=\ds\int_{-\infty}^{\beta}g_{2}(\beta,t-u\,|\,x)\,
 \alpha_1(x,u\,|\,x_0)\,{\rm d}x\,,
 \vspace{0.1cm} \\
 \hspace{2.1cm} =\ds\frac{1}{\sqrt{2 \pi [u \sigma_1^2+ (t-u)\sigma_2^2]^3}}
 \left[A_{-}(t,u)+A_{+}(t,u)\right]\,,
 \vspace{0.1cm} \\
 I_2(u,t\,|\,x_0):=\ds\int_{-\infty}^{\beta}g_1(\beta,t-u\,|\,x)\,
 \alpha_{2}(x,u\,|\,x_0)\,{\rm d}x\,,
 \vspace{0.1cm} \\
 \hspace{2.3cm} =\ds\frac{1}{\sqrt{2 \pi [u \sigma_2^2+ (t-u)\sigma_1^2]^3}}
 \left[B_{-}(t,u)+B_{+}(t,u)\right]\,,
 \end{array}
 \label{equation:27}
\end{equation}
with 
\begin{eqnarray*}
 && A_{\pm}(t,u) = [(\beta-x_0)\sigma_2^2\pm u(\mu_1\sigma_2^2-\mu_2\sigma_1^2)] \\
 && \hspace{1.6cm} 
 \times \exp\left\{{-\frac{[-\mu_1 u-\mu_2(t-u)\mp(\beta-x_0)]^2}
 {2 [u \sigma_1^2+ (t-u) \sigma_2^2]}}\right\}
 \exp\left\{-\frac{(\mu_1 \pm \mu_1)(\beta-x_0)}{\sigma_1^2}\right\} \\
 && \hspace{1.6cm} 
 \times \Phi\left(- [u(\mu_1\sigma_2^2-\mu_2 \sigma_1^2) \pm (\beta-x_0)\sigma_2^2] 
 \sqrt{\frac{t-u}{\sigma_1^2\sigma_2^2u[u\sigma_1^2+(t-u)\sigma_2^2]}}\right)
\end{eqnarray*}
and 
\begin{eqnarray*}
 && B_{\pm}(t,u) = [(\beta-x_0)\sigma_1^2\pm u(\mu_1\sigma_2^2-\mu_2\sigma_1^2)] \\
 && \hspace{1.6cm} 
 \times \exp\left\{{-\frac{[-\mu_1(t-u)-\mu_2 u\pm(\beta-x_0)]^2}
 {2[u \sigma_2^2+ (t-u) \sigma_1^2]}} \right\}
 \exp\left\{\frac{(\mu_2 \mp \mu_2)(\beta-x_0)}{\sigma_2^2}\right\} \\
 && \hspace{1.6cm} 
 \times \Phi\left([u(\mu_1 \sigma_1^2 -\mu_2\sigma_2^2) \pm (\beta-x_0) \sigma_1^2]
 \sqrt{\frac{t-u}{\sigma_1^2 \sigma_2^2 u[u \sigma_2^2+ (t-u)\sigma_1^2]}}\right).
\end{eqnarray*}
\end{theorem}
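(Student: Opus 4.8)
The inequality\eq{11} should follow by combining Lemma~\ref{lemma:1} with a single observation: inside the double integral of\eq{4a}, the inner FPT density $h_2(\beta,t-u\,|\,x)$ can be bounded below by discarding all but the first term of its own renewal equation\eq{4b}, i.e.\ $h_2(\beta,s\,|\,x)\geq \barF_D(s)\,g_2(\beta,s\,|\,x)$ for all $s>0$ and $x<\beta$. Substituting $s=t-u$ and this lower bound into\eq{4a} yields
$$
 h_1(\beta,t\,|\,x_0)\geq \barF_U(t)\,g_1(\beta,t\,|\,x_0)
 +\int_0^t\barF_D(t-u)\left[\int_{-\infty}^{\beta}g_2(\beta,t-u\,|\,x)\,\alpha_1(x,u\,|\,x_0)\,{\rm d}x\right]{\rm d}F_U(u),
$$
and the bracketed quantity is precisely $I_1(u,t\,|\,x_0)$ as \emph{defined} in the first line of\eq{27}. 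The symmetric argument, bounding $h_1$ below in\eq{4b} by $\barF_U(\cdot)\,g_1(\cdot)$, gives the second inequality with $I_2$. So the probabilistic content is essentially immediate from Lemma~\ref{lemma:1}; no step here is a real obstacle.

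\textbf{The computational core.} The substantive part of the statement is the closed-form evaluation of $I_1$ and $I_2$, i.e.\ showing that $\int_{-\infty}^{\beta}g_2(\beta,t-u\,|\,x)\,\alpha_1(x,u\,|\,x_0)\,{\rm d}x$ equals the displayed expression built from $A_{\pm}$. I would proceed as follows. Write out $g_2(\beta,t-u\,|\,x)$ from\eq{12} (with $\sigma_2,\mu_2$, starting point $x$, time $t-u$) and $\alpha_1(x,u\,|\,x_0)$ from\eq{13} (with $\sigma_1,\mu_1$, time $u$); the latter splits as a difference of two Gaussian-in-$x$ terms because of the bracket $[1-\exp\{-\tfrac{2}{\sigma_1^2 u}(\beta-x)(\beta-x_0)\}]$. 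For each of the two resulting pieces, the integrand is (Gaussian in $x$) $\times$ (polynomial in $(\beta-x)$, namely the factor $\beta-x$ from $g_2$): complete the square in $x$ to collapse the product of the two Gaussians into a single Gaussian in $x$ times a constant Gaussian factor in the parameters, then the remaining $\int_{-\infty}^{\beta}(\beta-x)\cdot(\text{Gaussian in }x)\,{\rm d}x$ evaluates to a linear combination of a $\Phi(\cdot)$ term and an exponential term. Careful bookkeeping of the two pieces gives the two terms $A_{-}$ and $A_{+}$, where the $\pm$ tags the sign attached to $(\beta-x_0)$ coming from the bracket in\eq{13}; the prefactor $1/\sqrt{2\pi[u\sigma_1^2+(t-u)\sigma_2^2]^3}$ emerges from the square-completion. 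The formula for $I_2$ is obtained by the same calculation with the roles of $(\mu_1,\sigma_1)$ and $(\mu_2,\sigma_2)$ interchanged and with $g_1$, $\alpha_2$ in place of $g_2$, $\alpha_1$.

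\textbf{Main obstacle.} The only delicate point is the algebra of the square-completion and the resulting identification of $A_{\pm}$: one must track three quadratic-in-$x$ contributions — from $g_2$'s exponent $(\beta-x-\mu_2(t-u))^2$, from $\alpha_1$'s exponent $(x-x_0-\mu_1 u)^2$, and from the cross term $(\beta-x)(\beta-x_0)$ in the bracket — and verify that after completing the square the mixed coefficients reassemble into the stated combinations $u(\mu_1\sigma_2^2-\mu_2\sigma_1^2)$, the exponent $[-\mu_1 u-\mu_2(t-u)\mp(\beta-x_0)]^2/(2[u\sigma_1^2+(t-u)\sigma_2^2])$, and the argument of $\Phi$. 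This is routine but error-prone; I would organize it by first rescaling to reduce the two Gaussians to a standard convolution, handle the $\Phi$-term via the identity $\int_{-\infty}^{\beta}e^{-a(x-m)^2}\,{\rm d}x=\sqrt{\pi/a}\,\Phi(\sqrt{2a}(\beta-m))$ and the exponential term via the analogous primitive of $x\,e^{-a(x-m)^2}$, and only at the end substitute back to recognize the paper's notation. Everything after Lemma~\ref{lemma:1} is deterministic calculus; nothing else is needed.
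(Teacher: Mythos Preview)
Your proposal is correct and follows essentially the same route as the paper. The paper substitutes\eq{4b} into\eq{4a} in full to obtain a three-term identity\eq{5a} and then drops the nonnegative third term, whereas you first observe $h_2\geq \barF_D\,g_2$ from\eq{4b} and substitute that bound into\eq{4a}; these are the same step in different order, and both rely only on nonnegativity of $h_k$ and $\alpha_k$. The closed-form evaluation of $I_1,I_2$ is handled identically: the paper simply writes ``after some calculations'' using\eq{12} and\eq{13}, which is exactly the completing-the-square computation you outline.
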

\begin{proof}
Substituting\eq{4b} in\eq{4a}, for all $t>0$ and $\beta>x_0$ we have 
\begin{equation}
\begin{array}{l}
 h_1(\beta,t\,|\,x_0)
 =\barF_U(t)\,g_1(\beta,t\,|\,x_0)
 +\ds\int_0^t\int_{-\infty}^{\beta}\barF_D(t-u)\,g_{2}(\beta,t-u\,|\,x)\,
 \alpha_1(x,u\,|\,x_0)\,{\rm d}x\,{\rm d}F_U(u)
 \\
 +\ds\int_0^t\int_{-\infty}^{\beta}\left[
 \int_0^{t-u}\int_{-\infty}^{\beta}h_1(\beta,t-u-v\,|\,y)\,
 \alpha_{2}(y,v\,|\,x)\,{\rm d}y\,{\rm d}F_D(v)\right] 
 \alpha_1(x,u\,|\,x_0)\,{\rm d}x\,{\rm d}F_U(u).
 \end{array}
 \label{equation:5a}
\end{equation}
Similarly, substitution of\eq{4a} in\eq{4b} yields:
\begin{equation}
\begin{array}{l}
  h_{2}(\beta,t\,|\,x_0)
 =\barF_D(t)\,g_{2}(\beta,t\,|\,x_0)
 +\ds\int_0^t\int_{-\infty}^{\beta}\barF_U(t-u)\,g_1(\beta,t-u\,|\,x)\,
 \alpha_{2}(x,u\,|\,x_0)\,{\rm d}x\,{\rm d}F_D(u)
 \\
 +\ds\int_0^t\int_{-\infty}^{\beta}\left[
 \int_0^{t-u}\int_{-\infty}^{\beta}h_{2}(\beta,t-u-v\,|\,y)\,
 \alpha_1(y,v\,|\,x)\,{\rm d}y\,{\rm d}F_U(v)\right] 
 \alpha_{2}(x,u\,|\,x_0)\,{\rm d}x\,{\rm d}F_D(u). 
 \end{array}
 \label{equation:5b}
\end{equation}
Hence, from Eqs.\eq{5a} and\eq{5b} we obtain 
\begin{eqnarray*}
 && \hspace{-0.6cm}
 h_1(\beta,t\,|\,x_0) \geq \barF_U(t)\,g_1(\beta,t\,|\,x_0)
 + \int_0^t\int_{-\infty}^{\beta}\barF_D(t-u)\,g_{2}(\beta,t-u\,|\,x)\,
 \alpha_1(x,u\,|\,x_0)\,{\rm d}x\,{\rm d}F_U(u), \\
 && \hspace{-0.6cm}
 h_{2}(\beta,t\,|\,x_0) \geq \barF_D(t)\,g_{2}(\beta,t\,|\,x_0)
 + \int_0^t\int_{-\infty}^{\beta}\barF_U(t-u)\,g_1(\beta,t-u\,|\,x)\,
 \alpha_{2}(x,u\,|\,x_0)\,{\rm d}x\,{\rm d}F_D(u).
\end{eqnarray*}
Recalling\eq{12} and\eq{13}, after some calculations Eqs.\eq{11} follow. 
\end{proof}
\par
We point out the following probabilistic interpretation of the functions defined 
in\eq{27}: for $k=1,2$, $I_k(u,t\,|\,x_0)$ identifies with the FPT density through $\beta$, 
when the initial infinitesimal moments are $\mu_k$ and $\sigma^2_k$, with initial 
state $x_0$, jointly with the condition that up to time $t$ only one inversion of 
infinitesimal moments has occurred, such an inversion having occurred at time $u$. 
\subsection{Bounds to FPT distribution functions}
We shall now obtain upper and lower bounds to the FPT cdf defined in\eq{23a}. 
To this purpose we prove that two integral equations for $H_1$ and $H_2$ hold. 
\begin{theorem}\label{lemma:2}
For all $t>0$ and $\beta>x_0$ the following equalities hold:
\begin{eqnarray}
 && \hspace{-1.5cm} H_1(\beta,t\,|\,x_0)
 =\barF_U(t)\,G_1(\beta,t\,|\,x_0)
 +\int_0^t G_1(\beta,u\,|\,x_0)\,{\rm d}F_U(u)
 \nonumber \\
 && \;+\int_0^t {\rm d}F_U(u)\int_{-\infty}^{\beta}
 \Bigg\{\barF_D(t-u)\,G_{2}(\beta,t-u\,|\,x)
 +\int_0^{t-u} G_{2}(\beta,v\,|\,x)\,{\rm d}F_D(v)
 \nonumber \\
 && \;+\int_0^{t-u}{\rm d}F_D(v)\int_{-\infty}^{\beta}\alpha_{2}(y,v\,|\,x)\,
 H_1(\beta,t-u-v\,|\,y)\,{\rm d}y\Bigg\}\alpha_1(x,u\,|\,x_0)\,{\rm d}x,
 \label{equation:7a}
 \\
 && \hspace{-1.5cm} H_{2}(\beta,t\,|\,x_0)
 =\barF_D(t)\,G_{2}(\beta,t\,|\,x_0)
 +\int_0^t G_{2}(\beta,u\,|\,x_0)\,{\rm d}F_D(u)
 \nonumber \\
 && \;+\int_0^t {\rm d}F_D(u)\int_{-\infty}^{\beta}
 \Bigg\{\barF_U(t-u)\,G_1(\beta,t-u\,|\,x)
 +\int_0^{t-u} G_1(\beta,v\,|\,x)\,{\rm d}F_U(v)
 \nonumber \\
 && \;+\int_0^{t-u}{\rm d}F_U(v)\int_{-\infty}^{\beta}\alpha_{1}(y,v\,|\,x)\,
 H_{2}(\beta,t-u-v\,|\,y)\,{\rm d}y\Bigg\}\alpha_{2}(x,u\,|\,x_0)\,{\rm d}x.
 \label{equation:7b}
\end{eqnarray}
\end{theorem}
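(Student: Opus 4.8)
The plan is to reduce the two stated identities to the pdf renewal equations\eq{4a}-\eq{4b} of Lemma~\ref{lemma:1}, which already embody the one genuinely probabilistic ingredient — the regeneration of the process at a switching instant. First I would derive a pair of \emph{one-step} integral equations for the cdf's $H_1,H_2$ by integrating\eq{4a} and\eq{4b} over $[0,t]$ in the time variable; then I would obtain\eq{7a}-\eq{7b} by substituting one of these one-step equations into the other.

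For the first step, integrating\eq{4a} over $[0,t]$ (using $H_1(\beta,t\,|\,x_0)=\int_0^t h_1(\beta,s\,|\,x_0)\,{\rm d}s$, valid because $H_1(\beta,0\,|\,x_0)=0$ when $\beta>x_0$) produces two terms. In the term $\int_0^t\barF_U(s)\,g_1(\beta,s\,|\,x_0)\,{\rm d}s$ I would use $g_1=\partial_s G_1$ from\eq{12}, the fact that $G_1(\beta,0\,|\,x_0)=0$, and a Stieltjes integration by parts to rewrite it as $\barF_U(t)\,G_1(\beta,t\,|\,x_0)+\int_0^t G_1(\beta,u\,|\,x_0)\,{\rm d}F_U(u)$; in the double-integral term, all integrands being non-negative and the time integrations running over bounded intervals, Tonelli's theorem lets me interchange the order of integration and integrate out $s$, which by\eq{23b} reproduces $H_2$. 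This yields
$$
 H_1(\beta,t\,|\,x_0)=\barF_U(t)\,G_1(\beta,t\,|\,x_0)+\int_0^t G_1(\beta,u\,|\,x_0)\,{\rm d}F_U(u)
 +\int_0^t{\rm d}F_U(u)\int_{-\infty}^{\beta}\alpha_1(x,u\,|\,x_0)\,H_2(\beta,t-u\,|\,x)\,{\rm d}x,
$$
and, by the same manipulation applied to\eq{4b}, the companion identity with $F_U,G_1,\alpha_1,H_2$ replaced by $F_D,G_2,\alpha_2,H_1$, respectively. (One could equivalently obtain these two equations directly by conditioning on the length of the first period and splitting according to whether it exceeds $t$; or equals some $u<t$ with the boundary reached before $u$, contributing $G_1(\beta,u\,|\,x_0)$ by\eq{10}; or equals some $u<t$ with $W_1$ still below $\beta$ at time $u$, located at $x$ with density $\alpha_1(x,u\,|\,x_0)$, whence the motion restarts in the opposite regime from $x$.)

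For the second step, into the displayed equation for $H_1$ I would substitute the right-hand side of the one-step equation for $H_2(\beta,t-u\,|\,x)$, namely
$$
 \barF_D(t-u)\,G_{2}(\beta,t-u\,|\,x)+\int_0^{t-u}G_{2}(\beta,v\,|\,x)\,{\rm d}F_D(v)
 +\int_0^{t-u}{\rm d}F_D(v)\int_{-\infty}^{\beta}\alpha_{2}(y,v\,|\,x)\,H_1(\beta,t-u-v\,|\,y)\,{\rm d}y,
$$
and then collect the three resulting terms into a single brace; this produces\eq{7a} verbatim, and the symmetric substitution gives\eq{7b}.

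I expect the only real obstacle to be the regeneration property underlying\eq{4a}-\eq{4b} — that, conditional on the first switching time being $u$ and on $W_1$ not yet having crossed $\beta$ and being located at $x$, the post-$u$ evolution is a fresh copy of the motion started from $x$ in the opposite regime, independent of the past — which rests on the independence of $\{B(t)\}$ and $\{\delta(t)\}$, the independent-increments property of $\{B(t)\}$, and a renewal argument on the switching times $\{T_n\}$; this is precisely the involved justification deferred to the Appendix and is already absorbed into Lemma~\ref{lemma:1}. Everything added on top of that lemma — the integration by parts, the appeal to Tonelli's theorem, and the algebraic substitution — is routine.
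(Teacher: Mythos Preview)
Your proposal is correct and follows essentially the same two-step structure as the paper: first obtain the one-step renewal equations for $H_1$ and $H_2$ (the displayed identity you write is exactly the paper's Eq.\eq{6a}, with companion\eq{6b}), then substitute one into the other to produce\eq{7a}-\eq{7b}. The only minor difference is in how you reach\eq{6a}-\eq{6b}: the paper derives them directly by conditioning on $U_1$ (your parenthetical alternative), whereas your primary route integrates the pdf equations\eq{4a}-\eq{4b} of Lemma~\ref{lemma:1} in time and uses integration by parts plus Tonelli---both routes are valid and yield the same intermediate equations.
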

\begin{proof}
Conditioning on $U_1$, from Eq.\eq{23a} we have:
\begin{equation}
 H_1(\beta,t\,|\,x_0)
 =\int_0^t\Prob\left(T_1^X\leq t\,
 \Big|\,U_1=u,X(0)=x_0,\delta(0)=1\right)\,{\rm d}F_U(u)
 +\barF_U(t)\,G_1(\beta,t\,|\,x_0).
 \label{equation:39}
\end{equation}
Hence, by noting that for $0<u<t$ there holds
\begin{eqnarray}
 \Prob\left(T_1^X\leq t\,|\,U_1=u\right) \!\!\!\!
 &=& \!\!\!\! \Prob\left(T_1^X\leq u\,|\,U_1=u\right)
 +\Prob\left(u<T_1^X\leq t\,|\,U_1=u\right) 
 \nonumber \\
 &=& \!\!\!\! G_1(\beta,u\,|\,x_0)
 +\int_{-\infty}^{\beta}\alpha_1(x,u\,|\,x_0)\,H_{2}(\beta,t-u\,|\,x)\,{\rm d}x 
 \label{equation:40}
\end{eqnarray}
one obtains: 
\begin{eqnarray}
 H_1(\beta,t\,|\,x_0) \!\!\!\!
 &=& \!\!\!\! \barF_U(t)\,G_1(\beta,t\,|\,x_0)
 +\int_0^t G_1(\beta,u\,|\,x_0)\,{\rm d}F_U(u)
 \nonumber \\
 &+& \!\!\!\! \int_0^t \int_{-\infty}^{\beta}
 \alpha_1(x,u\,|\,x_0)\,H_{2}(\beta,t-u\,|\,x)\,{\rm d}x\,{\rm d}F_U(u). 
 \label{equation:6a} 
\end{eqnarray}
By a similar argument, one is led to the following equation: 
\begin{eqnarray}
 H_2(\beta,t\,|\,x_0) \!\!\!\!
 &=& \!\!\!\! \barF_D(t)\,G_{2}(\beta,t\,|\,x_0)
 +\int_0^t G_{2}(\beta,u\,|\,x_0)\,{\rm d}F_D(u)
 \nonumber \\
 &+& \!\!\!\! \int_0^t \int_{-\infty}^{\beta}
 \alpha_{2}(x,u\,|\,x_0)\,H_1(\beta,t-u\,|\,x)\,{\rm d}x\,{\rm d}F_D(u).
 \label{equation:6b}
\end{eqnarray}
Substitution of the right-hand-side of\eq{6b} into\eq{6a}, and viceversa, finally 
yields Eqs.\eq{7a} and\eq{7b}.
\end{proof}
\par
The following theorem provides upper and lower bounds to the
FPT cdf's $H_1$ and $H_2$.
\begin{theorem}\label{theorem:2}
For all $t>0$ and $\beta>x_0$ there holds:
\begin{eqnarray}
 && \hspace{-1cm}
 \barF_U(t)\,G_1(\beta,t\,|\,x_0)
 +\int_0^t G_1(\beta,u\,|\,x_0)\,{\rm d}F_U(u)
 \leq H_1(\beta,t\,|\,x_0), 
 \label{equation:9a} \\
 && \hspace{-1cm}
 \barF_D(t)\,G_{2}(\beta,t\,|\,x_0)
 +\int_0^t G_{2}(\beta,u\,|\,x_0)\,{\rm d}F_D(u)
 \leq H_{2}(\beta,t\,|\,x_0).
 \label{equation:9b}
\end{eqnarray}
Moreover, if $\sigma_1=\sigma_2$, for all $k\in\{1,2\}$, $t>0$ and $\beta>x_0$ it is:
\begin{equation}
 H_k(\beta,t\,|\,x_0)\leq G_1(\beta,t\,|\,x_0). 
 \label{equation:37}
\end{equation}
\end{theorem}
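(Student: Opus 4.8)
The plan is to treat the two one-sided bounds separately, since they rest on very different ideas. For the lower bounds\eq{9a} and\eq{9b} I would simply read them off the integral equations already established in the previous theorem. By\eq{6a},
$$
 H_1(\beta,t\,|\,x_0)=\Bigl[\barF_U(t)\,G_1(\beta,t\,|\,x_0)
 +\int_0^t G_1(\beta,u\,|\,x_0)\,{\rm d}F_U(u)\Bigr]
 +\int_0^t\!\!\int_{-\infty}^{\beta}\alpha_1(x,u\,|\,x_0)\,H_{2}(\beta,t-u\,|\,x)\,{\rm d}x\,{\rm d}F_U(u),
$$
and on the relevant range $x<\beta$, $x_0<\beta$ the $\beta$-avoiding density $\alpha_1$ of\eq{13} is non-negative (its bracketed factor is positive because $(\beta-x)(\beta-x_0)>0$), while $H_2$ is a cdf and $F_U$ a cdf; hence the last double integral is $\ge 0$, and discarding it yields\eq{9a}. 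Inequality\eq{9b} follows identically from\eq{6b}.

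For the upper bound\eq{37}, which is where the hypothesis $\sigma_1=\sigma_2=:\sigma$ becomes essential, I would argue by a sample-path (coupling) comparison. Condition on the whole trajectory of the switching process, i.e. on the realization of the instants $T_n$ of\eq{29} (equivalently on $\{\delta(\tau):0\le\tau\le t\}$) together with $\{\delta(0)=k\}$; then $\tau\mapsto\mu(\tau)$ in\eq{35} is a fixed step function valued in $\{\mu_1,\mu_2\}$, and by\eq{1} with $\sigma(\cdot)\equiv\sigma$ we may realize $X$ and the Wiener process $W_1$ of\eq{24} on one and the same standard Brownian motion $B$:
$$
 X(\tau)=x_0+\int_0^{\tau}\mu(s)\,{\rm d}s+\sigma\,B(\tau),
 \qquad
 W_1(\tau)=x_0+\mu_1\tau+\sigma\,B(\tau).
$$
Since $\mu_1\ge 0\ge\mu_2$, we have $\mu_1-\mu(s)\ge 0$ for every $s$, so $W_1(\tau)-X(\tau)=\int_0^{\tau}[\mu_1-\mu(s)]\,{\rm d}s\ge 0$ for all $\tau$, pathwise. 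Hence $\{X(\tau)>\beta\}\subseteq\{W_1(\tau)>\beta\}$ for every $\tau$, so that $T_1^W\le T_k^X$ almost surely — the comparison uses nothing about the value of $\delta(0)$, so it covers both $k=1$ and $k=2$. Taking probabilities conditional on the path of $\delta$ and on $\{\delta(0)=k\}$, and then integrating out (the independence of $B$ and $\{\delta(t)\}$ postulated after\eq{35} makes $\Prob\{T_1^W\le t\}=G_1(\beta,t\,|\,x_0)$ unaffected by that conditioning), gives $H_k(\beta,t\,|\,x_0)=\Prob\{T_k^X\le t\}\le\Prob\{T_1^W\le t\}=G_1(\beta,t\,|\,x_0)$, which is\eq{37}.

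I expect the only genuinely delicate point to be the rigorous set-up of the coupling in the second part: one must be explicit that, after conditioning on the switching times, $X$ and $W_1$ are driven by a common Brownian motion with $X\le W_1$ identically in $\tau$, and that the resulting conditional first-passage inequality can be integrated back against the law of $\delta$. It is also worth remarking why the hypothesis cannot be relaxed: when $\sigma_1\ne\sigma_2$ the diffusive term $\sigma(\tau)\,B(\tau)$ of\eq{1} is no longer a fixed multiple of a single Brownian path, the pathwise domination $W_1\ge X$ breaks down, and one would not expect $X$ to be uniformly slower than $W_1$ in reaching $\beta$.
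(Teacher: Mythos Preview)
Your argument is correct and follows essentially the same route as the paper: the lower bounds are obtained by dropping the non-negative remainder term in the integral representations (you cite\eq{6a}--\eq{6b}, the paper cites the equivalent\eq{7a}--\eq{7b}), and the upper bound is obtained by the same pathwise coupling of $X$ and $W_1$ on a common Brownian motion under $\sigma_1=\sigma_2$, which the paper phrases as building ``clones'' on a common probability space to get $\{X(t)\}\leq_{\rm st}\{W_1(t)\}$ and hence $T_1^W\leq_{\rm st}T_k^X$. Your write-up in fact makes the coupling step explicit where the paper defers to a reference.
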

\begin{proof}
Inequalities\eq{9a} and\eq{9b} are a consequence of\eq{7a} and\eq{7b}, respectively. 
To prove the validity\eq{37}, we notice that by virtue of\eq{23a} and\eq{10}, the 
upper bound given in\eq{37} can be rewritten as 
\begin{equation}
 T_{1}^W\leq_{\rm st}T_{k}^X \qquad \hbox{for all $k\in\{1,2\}$,}
 \label{equation:28}
\end{equation}
where $T_{1}^W$ and $T_{k}^X$ are defined in\eq{38} and\eq{32}, and where 
$\leq_{\rm st}$ denotes the customary stochastic order (see Section 1.A of 
Shaked and Shanthikumar, 1994). Condition\eq{28} can be seen to follow from 
$\{X(t),t\geq 0\}\leq_{\rm st}\{W_1(t),t\geq 0\}$, which can be obtained by 
construction of processes ``clones'' of $\{X(t),t\geq 0\}$ and $\{W_1(t),t\geq 0\}$ 
on the same probability space (see Section 4.7.B of Shaked and Shanthikumar, 1994), and 
by recalling definitions\eq{1} for $\sigma(t)=\sigma_1=\sigma_2$ and\eq{24} for $k=1$. 
\end{proof}
%
\section{Simulation of FPTs}\label{section:D}
In the previous section we have obtained bounds to the FPT pdf and cdf. These are 
of interest because closed-form expressions are not known. In order to obtain 
estimates of the FPT pdf $h_{k}(\beta,t\,|\,x_0)$, in this section we construct 
a numerical procedure\footnote{Use of such procedure was made in a previous paper 
(Buonocore {\em et al.}, 2001) without disclosing the theoretical arguments on 
which it rests and without explaining how to perform certain critical steps.} 
for the simulation of the FPT of $\{X(t)\}$ through a constant boundary $\beta$. 
The values of $X(t)$ are simulated only at the switching times of the infinitesimal 
moments. Our simulation is based on the following equations holding for $x_0<\beta$ 
and $k\in\{1,2\}$: 
\begin{eqnarray}
 && \hspace{-1.5cm}
 \Prob\left\{T_{k}^X\leq t\,\Big|\,X(0)=x_0,\delta(0)=k, T_1=\tau\right\}
 =\Prob\left\{T_{k}^W\leq t\,\Big|\,W_k(0)=x_0\right\},
 \quad 0<t\leq \tau, 
 \label{equation:25} \\
 && \hspace{-1.5cm}
 \Prob\left\{X(\tau)\leq x\,\Big|\,X(0)=x_0,\delta(0)=k, T_1=\tau, T_{k}^X>\tau\right\} 
 \nonumber \\
 && \hspace{3cm}
 =\Prob\left\{W_k(\tau)\leq x\,\Big|\,W_k(0)=x_0,T_{k}^W>\tau\right\},
 \quad \tau>0, \;\; x\leq \beta.
 \label{equation:26}
\end{eqnarray}
In other words, during each random period $(T_n, T_{n+1}]$, $n=0,1,\ldots$, 
$X(t)$ behaves as a Wiener process $W_k(t)$ with fixed infinitesimal moments, so that 
the probabilities on the left-hand-sides of (\ref{equation:25}) and (\ref{equation:26}) 
can be expressed in terms of the corresponding probabilities 
for $W_k(t)$, as indicated in the right-hand-sides of (\ref{equation:25}) 
and (\ref{equation:26}). 
\par
The simulation procedure is specified as follows. 
\par 
$$
 \framebox{
 $\begin{array}{l} 
 \hbox{\bf The main simulation procedure}\\
 \hbox{\textsc{Step 0.}$\;$ input($\mu_1$, $\mu_2$, $\sigma_1$, $\sigma_2$, $k$, $\beta$, $x_0$, $t_{max}$, $\varepsilon_1$);}\\
 \hbox{\textsc{Step 1.}$\;$ $x:=x_0$, $t:=0$;}\\
 \hbox{\textsc{Step 2.}$\;$ if $k=1$ then \{$\mu_k:= \mu_1$, $\sigma_k:= \sigma_1$, $F:=F_U$\}, 
       else \{$\mu_k:= \mu_2$, $\sigma_k:= \sigma_2$, $F:=F_D$\};}\\
 \hbox{\textsc{Step 3.}$\;$ if $t>t_{max}$ then stop;}\\
 \hbox{\textsc{Step 4.}$\;$ generate the inversion instant $\tau$ according to distribution $F$;}\\
 \hbox{\textsc{Step 5.}$\;$ $p:= G_{k}(\beta,\tau\,|\,x)$; \quad [see Eq.\eq{10}]}\\
 \hbox{\textsc{Step 6.}$\;$ if $p<\varepsilon_1$ then goto \textsc{Step~12};}\\
 \hbox{\textsc{Step 7.}$\;$ generate an uniform pseudo-random number $u$ in $(0,1)$;}\\
 \hbox{\textsc{Step 8.}$\;$ if $u>p$ then goto \textsc{Step~12};}\\
 \hbox{\textsc{Step 9.}$\;$ generate a 
       pseudo-random number $\theta$ in $(0,\tau)$ from pdf $f_1(\theta)$; \quad [see Eq.\eq{14}]}\\
 \hbox{\textsc{Step 10.}$\;$ $fpt:=\theta+t$;}\\
 \hbox{\textsc{Step 11.}$\;$ output($fpt$); stop;}\\
 \hbox{\textsc{Step 12.}$\;$ generate a 
       pseudo-random number $z$ in $(-\infty,\beta)$ from pdf $f_2(z)$; \quad [see Eq.\eq{15}]}\\
 \hbox{\textsc{Step 13.}$\;$ $x:=z$, $t:=t+\tau$;}\\
 \hbox{\textsc{Step 14.}$\;$ if $k=1$ then $k:=2$, else $k:=1$;}\\
 \hbox{\textsc{Step 15.}$\;$ goto \textsc{Step~3}.}\\
 \end{array}$ 
} 
$$
\par
The simulation starts from $x=x_0$ at time $t=0$, where $x$ and $t$ denote current 
state and current time of $X(t)$, respectively. The integral variable $k$ specifies 
the regime of the process at time $0$: $k=1$ if the infinitesimal moments are 
$\mu_1$ and $\sigma_1^2$, whereas $k=2$ for $\mu_2$ and $\sigma_2^2$. The first 
switching instant $\tau$ of the infinitesimal moments is generated according 
to distribution $F_U$ if $k=1$, or according to $F_D$ if $k=2$. 
The simulation then proceeds by generating a 
Bernoulli random number $u$ and by comparing it (Step 8) with the FPT 
probability $G_{k}(\beta,\tau\,|\,x)$ given by\eq{10}. In order to avoid 
certain numerical problems, when $G_{k}(\beta,\tau\,|\,x_0)$ is smaller 
than a tolerance parameter $\varepsilon_1$, the procedure acts like as if 
$G_{k}(\beta,\tau\,|\,x_0)$ is zero. 
\par
(i) \ If $u\leq G_{k}(\beta,\tau\,|\,x)$, the simulated sample-path of
$X(t)$ crosses the boundary $\beta$ at a suitably generated crossing 
time $\theta\in (0,\tau)$. Note that, since  crossing occurs before 
inversion-time $\tau$, the alternation of infinitesimal moments has no 
effect on the determination of $\theta$. Such instant $\theta$ has to be 
simulated from the FPT pdf $f_1(\theta)$ conditioned by $T_{k}^W\leq\tau$
and $W_k(0)=x$. Due to\eq{10} and\eq{12}, such density is  
\begin{equation}
 f_1(\theta)={g_{k}(\beta,\theta\,|\,x)
 \over G_{k}(\beta,\tau\,|\,x)}.
 \qquad 0<\theta<\tau.
 \label{equation:14}
\end{equation}
After the simulation of the first crossing instant $\theta$ by the method 
described in Section A1, the procedure ends.
\par
(ii) \ If $u>G_{k}(\beta,\tau\,|\,x_0)$, the first passage has not occurred
before time $\tau$. The state $X(\tau)=z\in(-\infty,\beta)$ is then generated 
by means of the method shown in Section A2. Such state is simulated from 
the r.v.\ $W_k(\tau)$ conditioned by $T_{k}^W>\tau$ and $W_k(0)=x$. 
Due to Eqs.\eq{12} and\eq{13} its pdf is given by
\begin{equation}
 f_2(z)={\alpha_{k}(z,\tau\,|\,x)
 \over 1-G_{k}(\beta,\tau\,|\,x)},
 \qquad -\infty<z<\beta.
 \label{equation:15}
\end{equation}
The values of $x$, $t$ and $k$ are then restored: the current state $x$ is set 
to $z$, the current time $t$ is set to $t+\tau$ and the infinitesimal moments 
are switched, in the sense that the new value of $k$ is set equal to $3-k$.
A key feature of this procedure is that, due to the property of independent 
increments of the Brownian motion, the switching instants of the infinitesimal 
moments are regenerative. The simulation thus proceeds with the generation of a new
inversion instant, assuming $x$ as the new initial state at the current time $t$.
The procedure goes on until the first passage has occurred, or until a
preassigned maximum time $t_{max}$ is reached.
\par
By means of the described procedure a random sample of first-passage instants is 
obtained to construct estimates of the unknown FPT pdf. To this purpose, the 
following kernel estimator for $h_{k}(\beta,t\,|\,x_0)$ has been employed:
\begin{equation}
 \hat{h}(t)=\frac{1}{n\,\Delta}\,\sum_{i=1}^n K\!\left( \frac{t-T_i}{\Delta}\right), 
 \quad t > 0,
 \label{equation:36}
\end{equation}
where $\{T_i; \;i=1,2,\ldots,n\}$ is the random sample obtained by repeated 
use of the simulation procedure, $\Delta$ is the bandwidth of the kernel, and
$$
 K(t)
 :=\left\{ \begin{array}{ll}
 \ds{\frac{3}{4\sqrt{5}}\,\left(1-\frac{t^2}{5}\right)}, & t \in [-\sqrt{5},\sqrt{5}] \\
 0,                   & \hbox{otherwise}
 \end{array} 
 \right.
$$
is the Epanechnikov kernel (see Silverman, 1986). Estimates for 
$H_{k}(\beta,t\,|\,x_0)$ can thus be obtained by numerical integration of\eq{36}. 
\par
Some applications of the foregoing results and the simulation procedure 
will be discussed in Section \ref{section:E}, whereas a detailed 
description of the simulations of the random variables characterized by 
pdf's\eq{14} and\eq{15} will be given in the Appendix. 
\par
We point out that the role of alternating processes governed by 
exponentially-distributed alternating times has already been outlined in the 
biomathematical literature. For instance, as mentioned in Stadje (1987), some 
experimental studies suggest that the motion of certain micro-organisms can 
be approximated by trajectories that change directions at exponentially 
distributed random times. Another example of great biological interest, 
already referred to in Section 1, is found in Kitamura {\em et al.} (1999), 
where it is experimentally shown that the dwell times between consecutive 
jumps in the rising phase of myosin movements along actin filament during 
muscle contraction are exponentially distributed. Furthermore, aiming to 
describe the firing activity of a neuronal model subject to alternating 
input, a Wiener process with drift alternating at exponentially distributed 
times was recently studied by Buonocore {\em et al.} (2001). 
These are paradigmatic examples that lead one to focus attention on Brownian 
motion with alternating infinitesimal moments in the special case when the 
alternating times are exponentially distributed. Hereafter we show that in 
such a case the FPT pdf through constant boundaries for process\eq{1} is not 
necessarily unimodal, quite differently from the case of the Wiener process that 
leads to rigorously unimodal densities. Figure 1 shows\footnote{Throughout this paper, 
all numerical results are obtained by using a simulated random sample of size $10^6$.} 
a bimodal estimate of FPT pdf $h_{2}(3,t\,|\,0)$ obtained via our simulation 
procedure, together with the corresponding lower bound provided in\eq{11}, 
under the assumption that the alternating random times are exponentially 
distributed, with $\barF_U(t)=e^{-t/2}$ and $\barF_D(t)=e^{-2\,t}$, $t\geq 0$. 
%
\section{Applications}\label{section:E}
Hereafter, we shall indicate two different applications of the foregoing results and 
simulation procedure. 
The first application is of interest to environmental sciences. Aiming to theoretically 
construct a link between the intermittence of rainfall and the dynamics of moisture 
processes, in Freidlin and Pavlopoulos (1997) a stochastic model has been proposed 
in which the temporal evolution of the moisture content in a given atmospheric column 
is described by a stochastic process. This consists of the alternation of two Wiener 
processes with drifts of opposite signs and unequal infinitesimal variances, the 
alternation taking place when an upper saturation threshold or a lower dehydration 
threshold is reached. As a possible alternative model, we select process\eq{1} to 
describe the temporal evolution of the moisture content. Differently from 
Freidlin and Pavlopoulos (1997), in our model the alternation between two different 
regimes does not occur at the thresholds' reaching times, but at the occurrence 
of random times\eq{29}. However, in order to keep a similarity between the 
alternating mechanism of those two models, we assume that the inter-swithching 
times are distributed as in Freidlin and Pavlopoulos (1997). Hence, random 
variables $U_1,U_2,\ldots$ and $D_1,D_2,\ldots$ are now assumed to have 
inverse Gaussian pdf's 
\begin{equation}
	f_U(x)=\sqrt{l_1\over 2\pi x^3}\,\exp\left\{-{l_1(x-m_1)^2\over 2m^2_1 x}\right\}, 
 \qquad 
	f_D(x)=\sqrt{l_2\over 2\pi x^3}\,\exp\left\{-{l_2(x-m_2)^2\over 2m^2_2 x}\right\}, 
 \label{equation:30}
\end{equation}
for $x>0$. The removal of the thresholds is acceptable because, according to 
Freidlin and Pavlopoulos (1997), no empirical verification is available for their 
existence. Furthermore, it is also reasonable to assume that the alternating of dry 
and wet durations is regulated by a mechanism somewhat looser than that including 
the presence of precisely specified saturation and dehydration thresholds. 
\par
As study case we refer to the measurements performed within the TOGA-COARE 
(Tropical Ocean Global Atmosphere -- Coupled Ocean-Atmosphere Response Experiment) 
and reported in Freidlin and Pavlopoulos (1997). Making use of the estimates obtained 
by these authors via the method of moments, we are led to the following values 
for the parameters appearing in\eq{30} and for the alternating infinitesimal 
moments of $\{X(t)\}$:
\begin{equation}
 \begin{array}{llll}
	l_1=0.7518,  &  m_1=1.4215, 
 &
 l_2=0.5073,  &  m_2=1.0476, \\
 \mu_1=0.2313,  &   \sigma_1=0.3792,
 & 
 \mu_2=-0.3139, &   \sigma_2=0.4616.
\end{array}
\label{equation:31}
\end{equation}
Since the magnitude of negative drift exceeds that of positive drift, a 
negative net displacement is obtained. Figure 2 shows the estimates of FPT pdf 
$h_{k}(\beta,t\,|\,x_0)$ and of the corresponding cdf $H_{k}(\beta,t\,|\,x_0)$ 
obtained by means of our simulation procedure, as well as the corresponding 
lower bounds provided in Section~\ref{section:C}. Computations make use of 
densities\eq{30} and of the estimates\eq{31} of parameters. The simulation 
of the inverse Gaussian distributed random times has been performed by 
the method of Michael {\em et al.} (1976).
\par
We now come to an application in mathematical finance. As is well known, one 
of stochastic processes often used to describe the time course of the price 
$S(t)$ of risky assets is the geometric Brownian motion
$$
 S(t)=s_0\,\exp\{\mu\,t+\sigma\,B(t)\}, \qquad t\geq 0.
$$
An extension of this model is based on the assumption that parameters $\mu$ and 
$\sigma$ alternate between two values, with variations occurring randomly in time 
according to an alternating renewal process. Such alternation is meant to be 
responsible for the floating behavior of prices, which are often subject to 
alternating periods of growing and decreasing trends, though maintaining their 
high level of stochasticity. We thus refer to a continuous-time stochastic 
process $\{S(t),t\geq 0\}$ with state-space $(0,+\infty)$ defined as
\begin{eqnarray}
 S(t) \!\!\! &=& \!\!\! s_0\,\exp\{X(t)-x_0\} 
 \nonumber \\
 &=& \!\!\! s_0\,\exp\left\{\int_0^t \mu(\tau)\,{\rm d}\tau+\sigma(t)\,B(t)\right\},
 \qquad t\geq 0,
 \label{equation:34}
 \end{eqnarray}
with $s_0>0$, and where $X(t)$ is defined in\eq{1}, and $\mu(t)$ and $\sigma(t)$ 
are given in\eq{35}. Model\eq{34} characterizes a growing trend when $\delta(t)=1$, 
i.e.\ $\mu(t)=\mu_1\geq 0$ and $\sigma(t)=\sigma_1$, and a decreasing trend 
when $\delta(t)=2$, i.e.\ $\mu(t)=\mu_2\leq 0$ and $\sigma(t)=\sigma_2$. 
The initial trend is determined by $\delta(0)\in\{1,2\}$. 
\par
To come to the FPT problem through the boundary $\beta>s_0$, let 
$$
 T_{k}^S:=\inf\{t>0: S(t)>\beta\},
 \qquad S(0)=s_0,\; \delta(0)=k\in\{1,2\}.
$$
Hence, $T_{k}^S$ describes the first time when price $S(t)$ reaches 
the level $\beta$ from below. This is of interest in mathematical finance  
when $\beta$ may represents some critical threshold for price $S(t)$. 
We denote by $h_k^S(\beta,t\,|\,s_0)$ and by $H_k^S(\beta,t\,|\,s_0)$ the pdf and 
the cdf of $T_{k}^S$, respectively. Recalling\eq{23a} and\eq{23b}, from\eq{34} 
and the independence of the increments of $\{X(t)\}$ one can see that 
\begin{equation}
  h_k^S(\beta,t\,|\,s_0)=h_k\left(\ln{\beta\over s_0},t\,\Bigg|\,0\right),
 \qquad
  H_k^S(\beta,t\,|\,s_0)=H_k\left(\ln{\beta\over s_0},t\,\Bigg|\,0\right),
 \qquad 
 t>0.
 \label{equation:33}
 \end{equation}
Hence, $h_k^S$ and $H_k^S$ can be estimated by resorting to the method 
described in Section \ref{section:D} for $h_k$ and $H_k$, whereas bounds for $h_k^S$ and 
$H_k^S$ can be obtained by means of Eq.\eq{33} and Theorems \ref{theorem:1} and \ref{theorem:2}. 
\par
In order to include in model\eq{34} the occurrence of periods of alternating trends 
characterized by heavy-tailed distributions, we assume that the random times 
$U_n$, $D_n$ have respectively generalized Pareto distribution 
\begin{equation}
 F_U(x)=1-\left(1+\xi_1\,\frac{x}{\eta_1}\right)^{-1/\xi_1}, 
 \qquad 
 F_D(x)=1-\left(1+\xi_2\,\frac{x}{\eta_2}\right)^{-1/\xi_2}, 
 \qquad x>0,
 \label{equation:41}
 \end{equation}
with $\xi_k>0$ and $\eta_k>0$, $k=1,2$. This is quite reasonable, since such distribution 
is known to play a relevant role in various applied fields such as those related to 
insurance, finance, telecommunication traffic, queueing, in which (see, for instance, 
Mikosch and Nagaev, 1998, and Greiner {\em et al.\/}, 1999) measurements of time series 
show large degrees of variability in the arrival rates, and long-range dependence effects. 
\par
An example of estimate of FPT pdf $h_{1}^S(2,t\,|\,s_0)$ and of its lower bound, 
obtained by means of our simulation procedure is provided in Figure 3, 
whereas the corresponding cdf $H_{1}^S(2,t\,|\,s_0)$ and its lower and 
upper bounds are shown in Figure 4. 
\section*{Appendix} 
This Appendix is devoted to a thorough description of the probability arguments 
underlying some steps of the procedure referred to in Section \ref{section:D} 
to simulate the random variables having pdf's\eq{14} and\eq{15}. 
\subsection*{A1. Simulation when first passage has already occurred} 
Based on the Von~Neumann acceptance-rejection (VNAR) method (see for instance 
Ross, 1989) the following procedure simulates the generation of a pseudo-random 
number from the r.v.\ with pdf $f_1(\theta)$ given in\eq{14}: 
$$
 \framebox{
 $\begin{array}{l}
 \hbox{\bf Procedure}\\
 \hbox{\textsc{Step 1.}$\;$ evaluate the maximum $m:=f_1(\theta_m)$ of the pdf\eq{14};}\\
 \hbox{\textsc{Step 2.}$\;$ if $1/(m \tau) \leq \varepsilon_2$ then $\theta:= \theta_m$ and goto \textsc{Step 5};}\\
 \hbox{\textsc{Step 3.}$\;$ generate two uniform independent pseudo-random numbers $x$ in $(0,\tau)$ and $y$ in $(0,m)$;}\\
 \hbox{\textsc{Step 4.}$\;$ if $y \leq f_1(x)$ then $\theta:=x$, else goto \textsc{Step 3};}\\
 \hbox{\textsc{Step 5.}$\;$ return($\theta$).} 
 \end{array}$ 
} 
$$
Concerning Step 1, we point out that density\eq{14} is limited by the maximum 
$m=f_1(\theta_m)$ located at $\theta_m=\min\{\tau, c_m\}$, where
$$
 c_m=\cases{
 \ds{-3\sigma^2_k+\sqrt{9\sigma^4_k+4\mu_k^2(\beta - x)^2}\over 2\mu_k^2}
 & if $\mu_k\neq 0$,\cr
 \hfill\cr
 \ds{(\beta-x)^2\over 3\sigma^2_k}
 & if $\mu_k= 0$.}
$$
Then, $m\tau\geq\int_0^{\tau}f_1(\theta)\,d\theta=1$ since $m\geq f_1(\theta)$ for all 
$\theta\in(0,\tau)$. Hence, by Step 1 the maximum $m:=f_1(\theta_m)$ of the pdf\eq{14} 
is evaluated, to yield $\theta_m$ as output if $1/(m \tau) \leq \varepsilon_2$, where 
$\varepsilon_2$ is a preassigned tolerance parameter. If $1/(m \tau)>\varepsilon_2$, by 
Step 3 two uniform independent pseudo-random numbers $x$ in $(0,\tau)$ and $y$ in $(0,m)$ 
are being generated as long as there results $y \leq f_1(x)$. At such stage, the corresponding 
value of $x$ is yielded as output (Step 4). We point out that Step 3 and Step 4 rely 
on the following proposition whose straightforward proof is omitted. 
\begin{proposition}
If $W_k(0)=x$, for all $\theta\in(0,\tau)$ there holds
$$
 \Prob\left(T_{k}^W\leq\theta\,\Big|\,T_{k}^W\leq\tau\right)
 =\Prob[X\leq\theta\,|\,Y\leq f_1(X)],
$$
where $X$ is uniformly distributed in $(0,\tau)$, $Y$ is uniformly
distributed in $(0,m)$, $X$ and $Y$ are independent, and $m$ is the
maximum over $(0,\tau)$ of the pdf $f_1(\theta)$ given in\eq{14}.
\end{proposition}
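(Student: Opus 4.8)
The plan is to recognize this proposition as the standard correctness statement for the Von~Neumann acceptance--rejection scheme, so the argument will be a short direct computation of the conditional probability on the right-hand side, followed by identification of both sides with one and the same explicit ratio of first-passage distribution functions.

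First I would compute the joint probability by conditioning on $X$. Since $Y$ is uniform on $(0,m)$ and independent of $X$, and since $0\le f_1(x)\le m$ for every $x\in(0,\tau)$ by the very definition of $m$, we have $\Prob[Y\le f_1(x)\,|\,X=x]=f_1(x)/m$, whence
$$
 \Prob[X\le\theta,\,Y\le f_1(X)]
 =\frac{1}{\tau}\int_0^\theta \frac{f_1(x)}{m}\,{\rm d}x
 =\frac{1}{m\tau}\int_0^\theta f_1(x)\,{\rm d}x .
$$
Taking $\theta=\tau$ and using that $f_1$ is a probability density on $(0,\tau)$ (indeed $\int_0^\tau f_1(x)\,{\rm d}x=1$ by\eq{14} and\eq{10}) gives $\Prob[Y\le f_1(X)]=1/(m\tau)$. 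Dividing, the normalizing constant $1/(m\tau)$ cancels and one is left with
$$
 \Prob[X\le\theta\,|\,Y\le f_1(X)]=\int_0^\theta f_1(x)\,{\rm d}x .
$$

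Next I would evaluate the left-hand side. Because $\theta<\tau$, the event $\{T_{k}^W\le\theta\}$ is contained in $\{T_{k}^W\le\tau\}$, so, using\eq{10} with $W_k(0)=x$,
$$
 \Prob\left(T_{k}^W\le\theta\,\Big|\,T_{k}^W\le\tau\right)
 =\frac{\Prob(T_{k}^W\le\theta)}{\Prob(T_{k}^W\le\tau)}
 =\frac{G_{k}(\beta,\theta\,|\,x)}{G_{k}(\beta,\tau\,|\,x)} .
$$
It then remains to match this with $\int_0^\theta f_1(x)\,{\rm d}x$: by the definition\eq{14} of $f_1$ and formula\eq{12} for $g_k$, one has $\int_0^\theta f_1(s)\,{\rm d}s = G_{k}(\beta,\tau\,|\,x)^{-1}\int_0^\theta g_{k}(\beta,s\,|\,x)\,{\rm d}s = G_{k}(\beta,\theta\,|\,x)/G_{k}(\beta,\tau\,|\,x)$, since $g_{k}(\beta,\cdot\,|\,x)$ is the density associated with the cdf $G_{k}(\beta,\cdot\,|\,x)$ (which vanishes at $0$ because $\beta>x$). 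Both sides therefore equal the same ratio, which proves the claim.

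There is essentially no obstacle here; the only two points requiring a word of care are that $m$ is genuinely an upper bound for $f_1$ on all of $(0,\tau)$ — which is precisely why Step~1 of the procedure takes $m=f_1(\theta_m)$ with $\theta_m=\min\{\tau,c_m\}$, the global maximum — and that $f_1$ integrates to one over $(0,\tau)$, which is immediate from\eq{14}. Consequently the authors' description of the proof as ``straightforward'' is justified, and the computation above could be inserted verbatim should a fully explicit argument be wanted.
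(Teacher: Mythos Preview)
Your argument is correct and is precisely the standard acceptance--rejection computation the authors have in mind; indeed the paper omits the proof entirely, declaring it ``straightforward,'' so your derivation is exactly the kind of verification that would be supplied if one wished to spell it out. There is nothing to compare or correct.
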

%
%
%
\subsection*{A2. Simulation when first passage has not yet occurred}
In the procedure listed below, again a classical VNAR method is implemented in 
order to construct a pseudo-random number from the r.v.\ having pdf\eq{15}: 
$$
 \framebox{
 $\begin{array}{l}
 \hbox{\bf Procedure}\\
 \hbox{\textsc{Step 1.}$\;$ by {\bf Subroutine} generate a pseudo-random number $z$ from the r.v.\ with pdf}\\
        \hbox{\qquad $f_3(z)$; \quad [see Eq.\eq{16}]}\\
 \hbox{\textsc{Step 2.}$\;$ generate an uniform pseudo-random number $u$ in $(0,1)$;}\\
 \hbox{\textsc{Step 3.}$\;$ if $u\leq\ell(z)$ then $x:=z$, else goto \textsc{Step 1};}\\
 \hbox{\textsc{Step 4.}$\;$ return($x$).}\\
 \hbox{{\bf Subroutine} (simulates a truncated Gaussian density)}\\
 \hbox{\textsc{Step S1.}$\;$ $q:=1-\Phi\left({\beta-x-\mu_k\,\tau \over \sigma_k\,\sqrt{\tau}}\right)$;}\\
 \hbox{\textsc{Step S2.}$\;$ if $q<\varepsilon_3$ then $\{$ generate a standard normal pseudo-random number $n$;}\\
   \hbox{\qquad $w:=n$ and goto \textsc{Step~S7} $\}$;}\\
 \hbox{\textsc{Step S3.}$\;$ generate two uniform independent 
pseudo-random numbers $u$ and $v$ in $(0,1)$;}\\
 \hbox{\textsc{Step S4.}$\;$ $y:=\log v+{\beta-x-\mu_k\,\tau \over \sigma_k\sqrt{\tau}}$;}\\
 \hbox{\textsc{Step S5.}$\;$ if $u\geq\xi(y)$ then goto \textsc{Step~S3}; \quad [see Eq.\eq{19}] }\\
 \hbox{\textsc{Step S6.}$\;$ $w:=y$;}\\
 \hbox{\textsc{Step S7.}$\;$ $z:=x+\mu_k\,\tau+w\,\sigma_k\,\sqrt{\tau}$;}\\
 \hbox{\textsc{Step S8.}$\;$ return($z$).}
 \end{array}$ 
}
$$
We point out that use of VNAR method requires that the density\eq{15} to be 
simulated be factorized as follows: 
\begin{equation}
 f_2(z)=C\,f_3(z)\,\ell(z),
 \qquad -\infty<z<\beta,
 \label{equation:18}
\end{equation}
where $C=[1-G_{k}(\beta,\tau\,|\,x)]^{-1}$,
\begin{equation}
 f_3(z)={1\over\sqrt{2\pi\sigma^2_k \tau}}\,
 \exp\left\{-{(z-x-\mu_k\tau)^2\over 2\sigma^2_k \tau}\right\}
 {1\over\Phi\left({\beta-x-\mu_k\tau \over \sigma_k\sqrt{\tau}}\right)},
 \qquad -\infty<z<\beta,
 \label{equation:16}
\end{equation}
and
\begin{equation}
 \ell(z)=\Phi\left({\beta-x-\mu_k\tau \over \sigma_k\sqrt{\tau}}\right)
 \left[1-\exp\left\{-{2\over\sigma^2_k\tau}\,(\beta-z)\,(\beta-x)\right\}\right],
 \qquad -\infty<z<\beta.
 \label{equation:17}
\end{equation}
Note that factorization\eq{16} is made possible by virtue of\eq{13}. 
Step 1 of the procedure can thus be performed. 

We note that $C\geq 1$, $f_3(z)$ is the truncation over $(-\infty,\beta)$ 
of a Gaussian density with mean $x+\mu_k\tau$ and variance $\sigma^2_k\tau$, 
while $\ell(z)\in[0,1]$ for all $z\in(-\infty,\beta)$. The implemented 
VNAR method requires the generation by a suitable subroutine of a 
pseudo-random number from the r.v.\ $Z$ having pdf\eq{16} as summarized 
in Steps S1-S8. Step S7 gives $Z$ as output when $U\leq\ell(Z)$, where 
$U$ is an uniform pseudo-random number in $(0,1)$. We point out that 
the described procedure makes use of the following  
\begin{proposition}
For all $z\in(-\infty,\beta)$, we have: 
$$
 \int_{-\infty}^z f_2(y)\,dy
 =\Prob\left[Z\leq z\,|\,U\leq\ell(Z)\right],
$$
where $U$ and $Z$ are independent r.v.'s that are distributed
uniformly in $(0,1)$ and according to pdf\eq{16}, respectively.
\end{proposition}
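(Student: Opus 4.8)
The plan is to recognize this as nothing more than the standard correctness statement for the Von~Neumann acceptance--rejection scheme, and to verify it by a direct computation of the conditional probability. First I would invoke the factorization\eq{18}, namely $f_2(z)=C\,f_3(z)\,\ell(z)$ with $C=[1-G_{k}(\beta,\tau\,|\,x)]^{-1}$, $f_3$ the truncated Gaussian density\eq{16}, and $\ell(z)$ the function\eq{17}; this factorization is legitimate because of the explicit form\eq{13} of the $\beta$-avoiding density, and the fact that $\ell$ takes values in $[0,1]$ (noted in the text) is precisely what allows $\ell(z)$ to be read as a conditional acceptance probability.

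Next, using the independence of $U$ and $Z$ together with the fact that $U$ is uniform on $(0,1)$, I would condition on the value of $Z$: for every $y<\beta$ one has $\Prob[U\leq\ell(Z)\,|\,Z=y]=\ell(y)$, since $0\leq\ell(y)\leq 1$. Hence
\begin{eqnarray*}
 \Prob\left[Z\leq z,\;U\leq\ell(Z)\right] &=& \int_{-\infty}^z \ell(y)\,f_3(y)\,dy, \\
 \Prob\left[U\leq\ell(Z)\right] &=& \int_{-\infty}^{\beta}\ell(y)\,f_3(y)\,dy.
\end{eqnarray*}
Substituting $\ell(y)\,f_3(y)=f_2(y)/C$ from\eq{18}, the numerator of the conditional probability becomes $C^{-1}\int_{-\infty}^z f_2(y)\,dy$ and the denominator becomes $C^{-1}\int_{-\infty}^{\beta}f_2(y)\,dy$. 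Since $f_2$ is a genuine probability density on $(-\infty,\beta)$ --- indeed $\int_{-\infty}^{\beta}\alpha_{k}(y,\tau\,|\,x)\,dy=\Prob\{T_{k}^W>\tau\,|\,W_k(0)=x\}=1-G_{k}(\beta,\tau\,|\,x)$, so that $\int_{-\infty}^{\beta}f_2(y)\,dy=1$ --- the denominator equals $C^{-1}$, and forming the ratio yields exactly $\int_{-\infty}^z f_2(y)\,dy$, which is the asserted identity.

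The argument is essentially mechanical, and there is no genuine obstacle; the only two points that deserve a moment of care are the validity of the factorization\eq{18} (which rests on the closed form\eq{13}) and the normalization $\int_{-\infty}^{\beta}f_2(y)\,dy=1$. This is presumably why the statement is offered with its proof omitted: its content is just the textbook verification that acceptance--rejection returns samples from the target density $f_2$, here spelled out for the $\beta$-avoiding / truncated densities that arise in simulation step A2.
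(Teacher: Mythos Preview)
Your proposal is correct and is exactly the standard acceptance--rejection verification the paper has in mind; indeed the paper does not spell out a proof for this proposition at all (it is stated and then used), treating it, like Propositions~1 and~3, as a routine consequence of the VNAR method together with the factorization\eq{18}. There is nothing to add or compare: your conditioning-on-$Z$ computation, the use of $0\le\ell\le 1$, and the normalization $\int_{-\infty}^{\beta}f_2=1$ are precisely the ingredients that make the identity immediate.
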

%
%
%
\par
The above subroutine is based on the following considerations. First of all, 
the subroutine generates a pseudo-random number $z$ from the r.v.\ with pdf 
$f_3(z)$ given in Eq.\eq{16}. We note that if\eq{16} is the pdf of $Z$, then 
\begin{equation}
 W= \frac{Z-x-\mu_k \tau}{\sigma_k \sqrt{\tau}}
 \label{equation:2}
\end{equation}
is a truncated standard normal r.v.\ possessing pdf
\begin{equation}
 f_W(z)=\ds{\frac{1}{\sqrt{2 \pi}}}\exp\left\{-\frac{z^2}{2}\right\}
 \frac{1}{\Phi\left(\frac{\beta-x-\mu_k\,\tau}{\sigma_k\,\sqrt{\tau}}\right)},
 \qquad z<\frac{\beta-x-\mu_k\,\tau}{\sigma_k\,\sqrt{\tau}}.
 \label{equation:21}
\end{equation}
Via\eq{2} the subroutine thus generates a pseudo-random number from $Z$ by 
sampling a pseudo-random number from $W$. To this purpose, for a preassigned 
tolerance parameter $\varepsilon_3$, if
\begin{equation}
 1-\Phi\left(\frac{\beta-x-\mu_k\,\tau}{\sigma_k\,\sqrt{\tau}}\right)
 <\varepsilon_3,
 \label{equation:22}
\end{equation}
then the distribution of $W$ is approximated by a standard normal distribution. 
Therefore, a generation of a value for $W$ is performed by implementing any
classical generator of standard gaussian r.v.\ (see Step S2). If\eq{22} is not 
fulfilled, the following classical VNAR method is employed (see Steps S3-S6). 
Consider the r.v.\ $Y$, having pdf
\begin{equation}
 f_4(y)=\cases{
 \exp\left\{y-\ds{\beta-x-\mu_k\,\tau \over \sigma_k\,\sqrt{\tau}}\right\}
 & if $y<\ds{\beta-x-\mu_k\,\tau \over \sigma_k\,\sqrt{\tau}}$, \\
 \hfill\cr
 0 & otherwise,}
 \label{equation:20}
\end{equation}
with $k\in\{1,2\}$, $x,\beta\in\Reali$, $x\leq\beta$ and $\tau>0$.
Then, the pdf given in\eq{21} can be expressed as
$$
 f_W(z)=D\,f_4(z)\,\xi(z),
 \qquad
 z<\frac{\beta-x-\mu_k\,\tau}{\sigma_k\,\sqrt{\tau}},
$$
where
$$
 D=\frac{1}{\Phi
 \left(\frac{\beta - x - \mu_k \, \tau}{\sigma_k \, \sqrt{\tau}} \right)}
 \exp\left\{\frac{\beta-x-\mu_k \, \tau}{\sigma_k \sqrt{\tau}} + 4\right\}
$$
and
\begin{equation}
 \xi(z)=\frac{1}{\sqrt{2 \pi}} \, \exp\left\{-\frac{z^2 + 2 \, z + 8}{2}\right\},
 \label{equation:19}
\end{equation}
with $0\leq\xi(z)\leq 1$. Note that $D\geq 1$ since
$$
 \frac{\beta-x-\mu_k\,\tau}{\sigma_k\,\sqrt{\tau}}+4>0.
$$
A pseudo-random number from $Y$ is generated (Step S4) by using the inversion 
method of cdf's. We thus have
$$
 Y=\log U_{2}+\frac{\beta-x-\mu_k\,\tau}{\sigma_k\sqrt{\tau}}
 \qquad \Longleftrightarrow \qquad
 U_2=e^{Y-\ds{\frac{\beta-x-\mu_k\,\tau}{\sigma_k\sqrt{\tau}}}}
 =F_Y(Y),
$$
where $F_Y(\cdot)$ is the cdf of $Y$. Then, (Step S5), the subroutine samples 
a value for $W$ as long as
$$
 u \leq \xi \left( \log v + \frac{\beta-x-\mu_k \, \tau}{\sigma_k \sqrt{\tau}}\right),
$$
with $u$ and $v$ pseudo-random numbers in $(0,1)$ and where $\xi(\cdot)$ is given 
in \eq{19}. The theoretical justification is provided by the following proposition, 
whose straightforward proof is omitted. 
\begin{proposition}
For all $z<(\beta-x-\mu_k\,\tau)/(\sigma_k\,\sqrt{\tau})$ it is
$$
 \int_{-\infty}^z f_W(y)\,dy=\Prob[Y<z\,|\,U<\xi(Y)],
$$
where $f_W(y)$ is the pdf given in\eq{21}, $U$ and $Y$ are independent
r.v.'s such that $U$ is uniformly distributed in $(0,1)$ and $Y$ has pdf\eq{20},
and where $\xi(\cdot)$ is defined in\eq{19}.
\end{proposition}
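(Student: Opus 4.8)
The plan is to carry out the standard Von Neumann acceptance--rejection verification, using the factorization $f_W(z)=D\,f_4(z)\,\xi(z)$ recorded just above the statement, in which $D\geq 1$, $f_4$ is the pdf in\eq{20}, and $0\leq\xi(z)\leq 1$ for every admissible $z$. Write $a:=(\beta-x-\mu_k\,\tau)/(\sigma_k\,\sqrt{\tau})$, so that $f_4$ is supported on $(-\infty,a)$ and the restriction $z<a$ is exactly the hypothesis of the proposition.

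First I would evaluate the numerator by conditioning on $Y$. Since $U$ is uniform on $(0,1)$, independent of $Y$, and $0\leq\xi(y)\leq 1$, one has $\Prob[U<\xi(Y)\mid Y=y]=\xi(y)$, hence
$$
 \Prob[Y<z,\;U<\xi(Y)]=\int_{-\infty}^z \xi(y)\,f_4(y)\,{\rm d}y
 =\frac1D\int_{-\infty}^z f_W(y)\,{\rm d}y,
$$
the last equality being the factorization identity. Letting $z\uparrow a$ and using that $f_W$ is a probability density on $(-\infty,a)$ identifies the normalizing constant as $\Prob[U<\xi(Y)]=1/D>0$. Dividing the two expressions then gives
$$
 \Prob[Y<z\mid U<\xi(Y)]
 =\frac{\frac1D\int_{-\infty}^z f_W(y)\,{\rm d}y}{1/D}
 =\int_{-\infty}^z f_W(y)\,{\rm d}y,
$$
which is the claimed identity.

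The only point requiring verification beyond this bookkeeping is the factorization $f_W=D\,f_4\,\xi$ together with the bounds $D\geq 1$ and $\xi\in[0,1]$. The first is a one-line check: in the exponent of $D\,f_4(y)\,\xi(y)$ the terms $\pm a$, $\pm y$ and the additive constant $4$ cancel, leaving $-y^2/2$, so the product reduces to $\Phi(a)^{-1}(2\pi)^{-1/2}e^{-y^2/2}=f_W(y)$; the bounds $D\geq 1$ (because $a+4>0$) and $0\leq\xi\leq 1$ were already observed in the text immediately after the definitions of $D$ and $\xi$. Consequently there is no real obstacle here: the argument is entirely routine, which is why the statement is given without proof in the paper.
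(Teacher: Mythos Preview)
Your proof is correct and is precisely the standard acceptance--rejection computation the authors have in mind; the paper itself omits the proof as ``straightforward.'' The only minor remark is that the inequality $D\geq 1$ is not actually needed for your argument---you only use $D>0$ to divide---so even if the paper's justification ``because $a+4>0$'' were to fail for some parameter values, your derivation would still go through.
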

%
%
\par
The theoretical justification of the simulation procedure is thus completed. 
\subsection*{\bf Acknowledgments}
\setlength{\baselineskip}{13pt}
We thank an anonymous reviewer for his criticism on a previous version of this paper. 
This work has been performed within a joint cooperation agreement between
Japan Science and Technology Corporation (JST) and Universit\`a di Napoli
Federico II, under partial support by MIUR (cofin 2003) and by G.N.C.S. (INdAM).
%
\subsection*{\bf References}
%
\begin{description}
\item
A. Buonocore, A. Di Crescenzo and E. Di Nardo, 
``Input-output behavior of a model neuron with alternating drift,''
{\em BioSystems\/} vol. 67 pp. 27--34, 2002. 
\item
A. Buonocore, A.G. Nobile and L.M. Ricciardi, 
``A new integral equation for the evaluation of first-passage-time 
probability densities,''
{\em Adv. Appl. Prob.} vol. 27 pp. 102--114, 1987.
\item
D. Cyranoski, 
``Swimming against the tide,'' 
{\em Nature} vol. 408 pp. 764--766, 2000. 
\item
A. Di Crescenzo, 
``On Brownian motions with alternating drifts,'' 
in {\it Cybernetics and Systems 2000\/} (Trappl R. ed.), 
Vienna, Austria, 2000, pp. 324--329. 
\item
E. Di Nardo, A.G. Nobile, E. Pirozzi and L.M. Ricciardi, 
``A computational approach to first-passage-time problems 
for Gauss-Markov processes,'' 
{\em Adv. Appl. Prob.} vol. 33 pp. 453--482, 2001.
\item
M. Freidlin and H. Pavlopoulos, 
``On a stochastic model for moisture budget in an Eulerian atmospheric column,''
{\em Environmetrics\/} vol. 8 pp. 425--440, 1997.
\item
A. Giorno, A.G. Nobile and L.M. Ricciardi, 
``On the evaluation of first-passage-time probability densities via nonsingular equations,''
{\em Adv. Appl. Prob.} vol. 21 pp. 20--36, 1989. 
\item
M. Greiner, M. Jobmann and C. Kl\"{u}ppelberg, 
``Telecommunication traffic, queueing models, and subexponential distributions,''
{\em Queueing Systems} vol. 33 pp. 125--152, 1999. 
\item
K. Kitamura, M. Tokunaga, A. Hikikoshi Iwane  and T. Yanagida,   
``A single myosin head moves along an actin filament with regular steps of 5.3 nanometres,'' 
{\em Nature} vol. 397 pp. 129--134, 1999.
\item
J.R. Michael, W.R. Schucany and R.W. Haas, 
``Generating random variates using transformations with multiple roots,'' 
{\it The American Statistician\/} vol. 30 pp. 88--90, 1976.
\item
T. Mikosch and A.V. Nagaev, 
``Large deviations of heavy-tailed sums with applications in insurance,'' 
{\it Extremes\/} vol. 1 pp. 81--110, 1998.
\item
L.M. Ricciardi, A. Di Crescenzo, V. Giorno and A.G. Nobile, 
``An outline of theoretical and algorithmic approaches to first
passage time problems with applications to biological modeling,''
{\it Mathematica Japonica\/} vol. 50 pp. 247--322, 1999.
\item
S. Ross, 
{\em Introduction to Probability Models\/}, Fourth Edition, 
Academic Press: Boston, 1989.
\item
M. Shaked and J.G. Shanthikumar, 
{\em Stochastic Orders and Their Applications\/}, 
Academic Press: San Diego, 1994.
\item
B.W. Silverman, 
{\em Density Estimation for Statistics and Data Analysis\/}, 
Chapman and Hall: London, 1986. 
\end{description}
\newpage

\begin{figure} 
\centering{
\epsfxsize=12cm
\epsfysize=8cm
\epsfbox{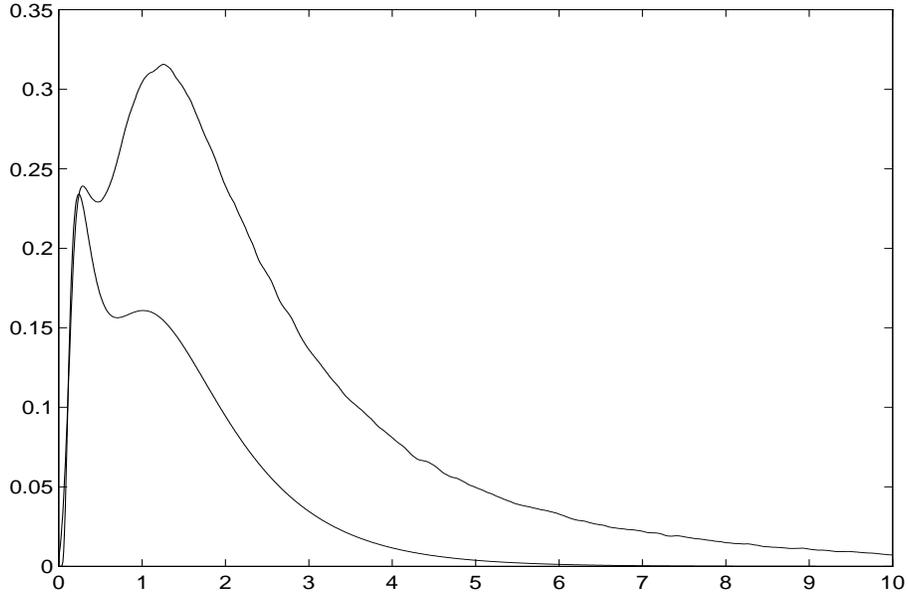}
\caption{\small 
FPT density $h_{k}(\beta,t\,|\,x_0)$ and its lower bound for exponentially 
distributed alternating times, with rates $\lambda_1=1/2$ and $\lambda_2=2$, and 
$\mu_1=1$, $\mu_2=-1$, $\sigma_1^2=1$, $\sigma_2^2=10$, $\beta=3$, $x_0=0$ and $k=2$.}
}
\end{figure}
\begin{figure} 
\centering{
\epsfxsize=12cm
\epsfysize=8cm
\epsfbox{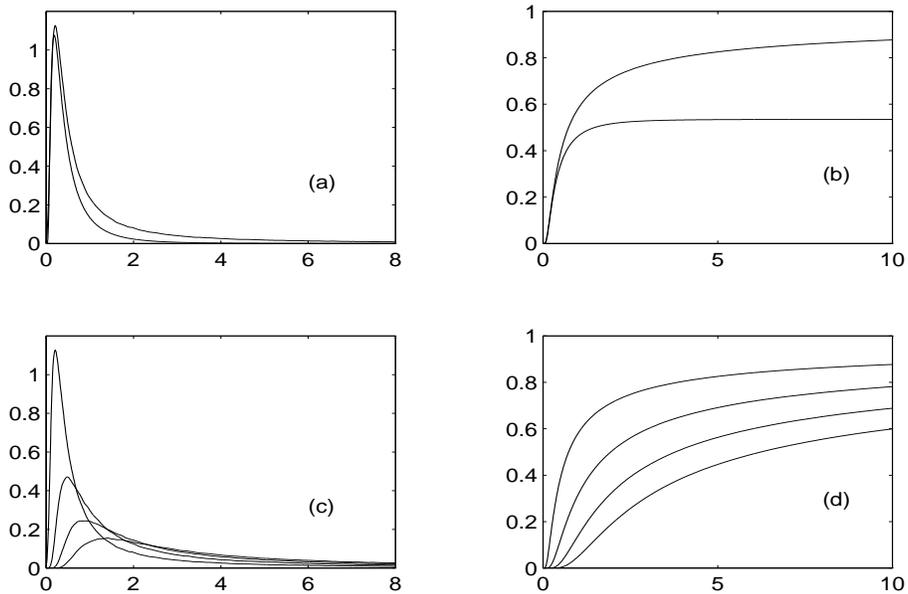}
\caption{\small 
(a) FPT density $h_{k}(\beta,t\,|\,x_0)$ and its lower bound for inverse Gaussian 
distributed alternating times, with parameters given in\eq{31}, $k=1$, $x_0=0$ and $\beta=0.3$; 
(b) the corresponding distribution function $H_{k}(\beta,t\,|\,x_0)$ and its lower bound; 
(c) FPT densities as in (a) with (from top to bottom near the origin) $\beta=0.3$, $0.5$, $0.7$, $0.9$;  
(d) the corresponding distribution functions.}
}
\end{figure}
\begin{figure} 
\centering{
\epsfxsize=12cm
\epsfysize=8cm
\epsfbox{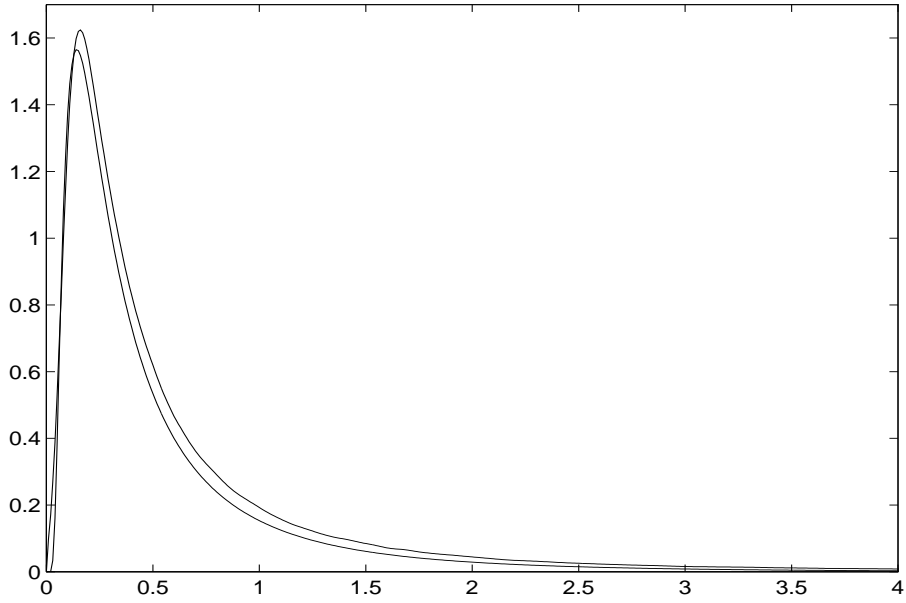}
\caption{\small 
An example of estimated FPT density $h_{1}^S(2,t\,|\,1)$ and of its lower bound. Here process\eq{34} is 
characterized by $\mu_1=1$, $\mu_2=-1$ and $\sigma_1=\sigma_2=1$. The alternating times occur according 
to a generalized Pareto distribution\eq{41}, with $\xi_1=1$, $\eta_1=2$, $\xi_2=3$ and $\eta_2=4$.}
}
\end{figure}
\begin{figure} 
\centering{
\epsfxsize=12cm
\epsfysize=8cm
\epsfbox{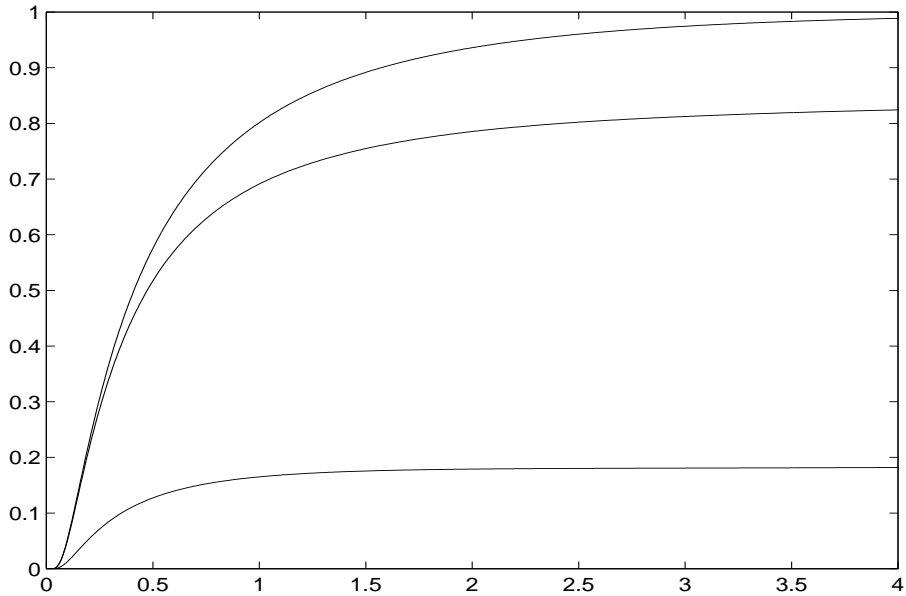}
\caption{\small 
Cumulative distribution function $H_{1}^S(2,t\,|\,1)$ and its bounds under the assumptions of Fig.\ 3. 
The upper bound is obtained from\eq{37}, holding for $\sigma_1=\sigma_2$.}
}
\end{figure}
%
\end{document}